\newtheorem{prop}{Proposition}[section]
\newtheorem{thm}[prop]{Theorem}
\newtheorem{cor}[prop]{Corollary}
\newtheorem{conj}[prop]{Conjecture}
\newtheorem{lem}[prop]{Lemma}
\theoremstyle{definition}
\newtheorem{que}[prop]{Question}
\newtheorem{defn}[prop]{Definition}
\newtheorem{expl}[prop]{Example}
\newtheorem{rem}[prop]{\it Remark}
\newtheorem*{claim*}{Claim}
\newcommand{\bP}{\mathbb{P}}
\newcommand{\bC}{\mathbb{C}}
\newcommand{\bR}{\mathbb{R}}
\newcommand{\bA}{\mathbb{A}}
\newcommand{\bQ}{\mathbb{Q}}
\newcommand{\bZ}{\mathbb{Z}}
\newcommand{\bN}{\mathbb{N}}
\newcommand{\tD}{\widetilde{D}}
\newcommand{\tM}{\tilde{M}}
\newcommand{\tGamma}{\tilde{\Gamma}}
\newcommand{\tDelta}{\tilde{\Delta}}
\newcommand{\cX}{\mathcal{X}}
\newcommand{\cO}{\mathcal{O}}
\newcommand{\cI}{\mathcal{I}}
\newcommand{\cM}{\mathcal{M}}
\newcommand{\cJ}{\mathcal{J}}
\newcommand{\cE}{\mathcal{E}}
\newcommand{\fm}{\mathfrak{m}}
\newcommand{\QSing}{\mathrm{QSing}}
\newcommand{\mult}{\mathrm{mult}}
\newcommand{\lct}{\mathrm{lct}}
\newcommand{\Vol}{\mathrm{Vol}}
\newcommand{\ord}{\mathrm{ord}}
\newcommand{\lnlc}{\ell_{\mathrm{nlc}}}
\newcommand{\Aut}{\mathrm{Aut}}
\newcommand{\Bir}{\mathrm{Bir}}
\newcommand{\wt}{\mathrm{wt}}
\newcommand{\rin}{\mathrm{in}}
\newcommand{\Bsr}{\mathrm{Bsr}}
\newcommand{\origin}{\mathbf{0}}
\begin{document}

\title[Birational superrigidity is not a locally closed property]{Birational superrigidity is not a locally closed property (Appendix: On a conjecture of Tian)}

\author{Ziquan Zhuang}
\address{Department of Mathematics, Princeton University, Princeton, NJ 08544, USA.}
\email{zzhuang@math.princeton.edu}
\date{}

\maketitle

\begin{abstract}
    We prove an optimal result on the birational rigidity and K-stability of index $1$ hypersurfaces in $\mathbb{P}^{n+1}$ with ordinary singularities when $n\gg 0$ and also study the birational superrigidity and K-stability of certain weighted complete intersections. As an application, we show that birational superrigidity is not a locally closed property in moduli. We also prove (in the appendix) that the alpha invariant function is constructible in some families of complete intersections.
\end{abstract}

\section{Introduction}

A Fano variety $X$ is said to be birationally superrigid if every birational map $f:X\dashrightarrow Y$ from $X$ to the source of a Mori fiber space is an isomorphism (see e.g. \cite{Che-survey} for an introduction). In particular, such Fano varieties are not rational. This property identifies a very special class of Fano varieties and an interesting question is whether the set of birationally superrigid Fano varieties form a ``nice" moduli. Indeed, it follows from the definition that $X$ has terminal singularities and therefore by the seminal work of Birkar \cite{Birkar-1,Birkar-2} on the Borisov-Alexeev-Borisov conjecture, birationally superrigid Fano varieties belong to a bounded family. Moreover, such moduli (if exists) satisfies the valuative criterion of separatedness by a recent result of Stibitz and the author \cite{SZ-rigid-imply-stable}. In addition, birational superrigidity is a constructible condition by \cite[Corollary 7.8]{SC-log-model}. So the next step is to check whether birational superrigidity is a locally closed property or not. 

\iffalse
More precisely, for a flat family $f:\cX\rightarrow T$ of Fano varieties, we define $\Bsr(f)$ to be the set of $t\in T$ such that the corresponding fiber $\cX_t$ is birationally superrigid and ask:

\begin{que} \label{que:bsr openness}
Is $\Bsr(f)$ open in $T$ for every flat family $f:\cX\rightarrow T$ of Fano varieties?
\end{que}
\fi

Unfortunately the answer is no in general, and one of the main purposes of this note is to construct such counterexamples. We remark that the stronger expectation that birational superrigidity is open in moduli is known to be false, at least in dimension three: one such counterexample is given by the family of quasi-smooth quintic hypersurfaces in $\bP(1^4,2)$ \cite[Example 6.3]{CP-3fold-hypersurface}. The analogous openness question for birational rigidity (a weaker notion than birational superrigidity) \cite[Conjecture 1.4]{Corti-4n^2-ineq} also has a negative answer by the 3-dimensional counterexample in \cite{CG-rigidity-not-open}. In both examples, birational (super)rigidity is a locally closed property in the corresponding moduli.

Our construction is based on the degeneration of hypersurfaces into double covers. Let $m$ be a sufficiently large integer and let $x_0,\cdots,x_{n+1},y$ be the weighted homogeneous coordinates of $\bP(1^{n+2},m)$. Let $f_s,g_s$ (parameterized by $s\in\bA^1$) be homogeneous polynomials in $x_0,\cdots,x_{n+1}$ of degree $2m$ and $m$ respectively so that
\[\cX=(y^2-f_s=ty-g_s=0)\subseteq \bP(1^{n+2},m)\times \bA^2_{s,t}\]
defines a family of weighted complete intersections of dimension $n$ parameterized by $\bA^2$. For $t\neq 0$, it is easy to see that $\cX_{s,t}\cong (t^2f_s-g^2_s=0)\subseteq \bP^{n+1}$ is a hypersurface of degree $2m$ while $\cX_{s,0}$ is the double cover of the hypersurface $G_s=(g_s=0)\subseteq \bP^{n+1}$ branched over the divisor $F_s\cap G_s$ where $F_s=(f_s=0)$. We show that with suitable choices of $f_s$ and $g_s$, this provides the counterexample we want in every sufficiently large odd dimension.

\begin{thm} \label{thm:counterexample}
Notation as above. Let $x\in\bP^{n+1}$. Assume that $n=2m-1$, $m\gg 0$ and the following:
    \begin{enumerate}
        \item $F_0$ and $G_0$ have a unique ordinary singularity at $x$ with $\mult_x F_0=2m-2$ and $\mult_x G_0=m-1$ and are otherwise smooth,
        \item the projective tangent cone of $F_0\cap G_0$ at $x$ is a smooth complete intersection,
        \item $\cX_{s,t}$ is smooth when $s\neq 0$.
    \end{enumerate}
Then $\cX_{s,t}$ is birationally superrigid if and only if $s\neq 0$ or $(s,t)=(0,0)$.
\end{thm}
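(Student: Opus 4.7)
The argument splits into three cases: (i) $s=0$, $t\neq 0$, where superrigidity fails; (ii) $s\neq 0$, where $\cX_{s,t}$ is smooth; and (iii) $(s,t)=(0,0)$, the singular double cover. For case (i), eliminating $y=g_0/t$ identifies $\cX_{0,t}$ with the hypersurface $V(g_0^2-t^2f_0)\subset\bP^{n+1}$ of degree $2m=n+1$. Hypothesis (1) makes it smooth away from $x$, and hypothesis (2) forces $\bar f_0$ to not be a scalar multiple of $\bar g_0^2$, so the projective tangent cone of $\cX_{0,t}$ at $x$ is nonzero and $\mult_x\cX_{0,t}=2m-2$. Projection from $x$ then gives a rational map $\pi\colon\cX_{0,t}\dashrightarrow\bP^n$ of generic degree $2m-(2m-2)=2$, whose covering involution $\iota$ is a birational self-map of $\cX_{0,t}$. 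To rule out $\iota\in\Aut(\cX_{0,t})$ I would blow up $x$ to obtain $\sigma\colon\widetilde{\cX}\to\cX_{0,t}$ with exceptional divisor $E\cong V(\bar g_0^2-t^2\bar f_0)\subset\bP^n$; the morphism $\tilde\pi$ extending $\pi$ embeds $E$ into $\bP^n$ as a hypersurface, and a Bezout count on a generic line through $x$ in a direction $v\in E$ shows that the second preimage of $v$ under $\tilde\pi$ lies off $E$, so $\tilde\iota$ moves a generic point of $E$ off $E$. Since $x$ is the unique singular point of $\cX_{0,t}$, any biregular $\iota$ would have to fix $x$ and hence preserve $E$, contradiction; so $\cX_{0,t}$ is not birationally superrigid.

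For cases (ii) and (iii), I would invoke the paper's main superrigidity results together with the classical theorem of de Fernex. In case (ii), $\cX_{s,t}$ is smooth by hypothesis (3); for $t\neq 0$ it is a smooth hypersurface of degree $n+1$ in $\bP^{n+1}$, birationally superrigid by de Fernex's theorem for $n\gg 0$, while for $t=0$ it is a smooth double cover of $G_s$ branched along the smooth complete intersection $F_s\cap G_s$, whose superrigidity I expect to follow from the paper's general result on the family of weighted complete intersections $(y^2=f,\,ty=g)$. For $(s,t)=(0,0)$, $\cX_{0,0}$ is the analogous double cover of $G_0$, now with a single ordinary singular point at $x$; hypothesis (2) guarantees that the exceptional divisor over $x$ in $\cX_{0,0}$ is a smooth lower-dimensional complete intersection, and superrigidity should again follow from the paper's main theorem in the singular setting.

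The main obstacle is case (iii). I would apply the Noether--Fano inequality: assuming for contradiction a movable linear system $|M|\subseteq|-rK_{\cX_{0,0}}|$ with a non-canonical center $Z$ of the pair $(\cX_{0,0},\tfrac{1}{r}M)$, I would rule out centers $Z$ disjoint from the preimage of $x$ via Pukhlikov-style multiplicity bounds adapted to the double cover structure, and rule out centers lying over $x$ via an inductive multiplicity analysis on the smooth exceptional divisor produced by hypothesis (2), which is itself a double cover of $\bP^n$ branched along a smooth complete intersection and so admits an analogous estimate. Matching these local and global estimates across the singular point, and in particular adapting Pukhlikov's $4n^2$-inequality to the singular double cover, is the technical core of the paper's main theorem on weighted complete intersections and the step I expect to require the most work.
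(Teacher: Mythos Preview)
Your three-case decomposition and the handling of (i) and (ii) match the paper: projection from $x$ yields a non-biregular birational involution on $\cX_{0,t}$ for $t\neq 0$, de Fernex covers the smooth hypersurfaces $\cX_{s,t}$ with $st\neq 0$, and Theorem~\ref{thm:weighted cpi} covers the smooth double covers $\cX_{s,0}$ with $s\neq 0$. (The paper is terser in case (i) and does not spell out why the involution is non-biregular; your argument via the exceptional divisor is fine.)

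In case (iii) there is a real gap in your outline. The exceptional divisor over $x\in X=\cX_{\origin}$ is \emph{not} a double cover of $\bP^n$. Because $X$ sits in the weighted projective space $\bP(1^{n+2},m)$, the correct resolution is the \emph{weighted} blowup $\pi:Y\to X$ with $\wt(x_i)=1$ and $\wt(y)=m-1$; its exceptional divisor $E\subset\bP(1^{n+1},m-1)$ is a smooth weighted complete intersection, namely the double cover of the projective tangent cone of $G_0$ (a degree $m-1$ hypersurface in $\bP^n$) branched over the tangent cone of $F_0$. An ordinary blowup does not produce a smooth exceptional divisor here, so your ``inductive multiplicity analysis on the smooth exceptional divisor'' cannot start without this choice. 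Second, the key inequality $c:=\ord_E M\le 1$ is not a consequence of a generic Pukhlikov-type multiplicity bound: the paper manufactures an auxiliary divisor $D=(g_{m-1}=0)\cap X$ with $\ord_E D=m$ and uses $\tM\cdot\tD\cdot(\pi^*H-E)^{n-2}\ge 0$ (noting that $\pi^*H-E$ is base-point-free on $Y$) to force $c\le 1$. Only after these two ingredients does the argument run as you sketched---decompose $\tM^2=Z_1+Z_2$ into components inside and outside $E$, bound each via the multiplicity estimates for weighted complete intersections (Lemma~\ref{lem:mult-bound-quasi-smooth} and Corollary~\ref{cor:mult-bound-wt-cpi}), and conclude that $(Y,\tM)$ is canonical via Corti's inequality and the method of Theorem~\ref{thm:hypersurface}. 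Without the weighted blowup and the auxiliary divisor $D$, the local analysis at $x$ does not get off the ground.
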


\iffalse
Let $n=2m-1$, then for suitable choices of $f$ and $g$, $\cX_t$ ($t\neq 0$) will have a point of multiplicity $2m-2$ which induces a birational involution on $\cX_t$ (a general line through this point intersects $\cX_t$ in exactly two other points) and therefore is not birationally superrigid while $\cX_0$ is a singular Fano double hypersurface of Fano index $1$. We will show that for more careful choices of $f$ and $g$, $\cX_0$ is birational superrigid.
\fi

The study of the birational superrigidity of $\cX_{s,t}$ in the above example can be further divided into two parts. We first prove the birational superrigidity of smooth double covers of Fano index $1$ (which include $\cX_{s,0}$ for $s\neq 0$) by generalizing the multiplicity bound in the complete intersection case \cite{Puk-hypersurface,Suzuki-cpi}. Indeed we prove something stronger:

\begin{thm} \label{thm:weighted cpi}
Fix $r,s\in \bZ$, then there exists an integer $M$ such that every smooth weighted complete intersection $X\subseteq\bP(1^m,a_1,\cdots,a_s)$ of codimension $r$ and index one $($i.e. $-K_X\sim H:=c_1(\cO_X(1)))$ with a base-point-free anticanonical linear system $|-K_X|$ is birationally superrigid and K-stable when $m\ge M$.
\end{thm}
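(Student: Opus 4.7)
The plan is to apply the Noether-Fano method. By the Noether-Fano inequality, to prove $X$ is birationally superrigid it suffices to show that for every movable linear system $\cM \subseteq |-nK_X|$ with $n \ge 1$, the pair $(X, \tfrac{1}{n}\cM)$ is canonical. Assume toward contradiction that $(X, \tfrac{1}{n}\cM)$ has a non-canonical center $Z \subseteq X$. Because $|-K_X|$ is base-point-free, cutting by general members of $|H| = |-K_X|$ through $Z$ lets us induct on $\dim Z$ and reduce to the case $\dim Z = 0$: positive-dimensional centers are eliminated since the $H$-degree of $Z$ is bounded while non-canonicity forces $\mult_Z \cM$ to be large relative to $n$. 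After this reduction $Z = \{x\}$ is a smooth point of $X$, and since $X$ is smooth, $x$ lies in the smooth locus of $\bP(1^m, a_1, \ldots, a_s)$.

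The core step is a uniform multiplicity estimate, modeled on Pukhlikov's secant-variety technique \cite{Puk-hypersurface} and its extension to complete intersections by Suzuki \cite{Suzuki-cpi}. In the affine chart around $x$, the tangent cone of $X$ is cut out by the linear parts of $r$ defining equations together with a bounded number of additional linear constraints coming from the $s$ non-unit weights. Analysis of the projectivized tangent cone of any irreducible codimension-two subvariety $Y \subseteq X$ containing $x$ sitting inside that of $X$ should yield a bound of the form
\[
\mult_x Y \le \phi(m, r, s) \cdot \deg_H Y,
\]
where $\phi(m, r, s) \cdot H^n \to 0$ as $m \to \infty$ with $r, s$ fixed. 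Applying this to $Y = D_1 \cdot D_2$ for general $D_1, D_2 \in \cM$ (so that $\deg_H Y = n^2 H^n$) gives $\mult_x(D_1 \cdot D_2) < 4n^2$ once $m \ge M(r, s)$; this contradicts the Corti $4n^2$-inequality $\mult_x(D_1 \cdot D_2) > 4n^2$, which holds at any non-canonical smooth point of $(X, \tfrac{1}{n}\cM)$.

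The main obstacle is establishing the multiplicity bound uniformly in the weighted setting. In the classical hypersurface case, general sections by $|H|$ are again smooth hypersurfaces in smaller projective spaces, but in weighted projective space this recursion breaks down: a general section by $|H|$ need not realize $X$ as a weighted complete intersection of the same codimension in a smaller weighted projective space. One must exploit the base-point-freeness of $|-K_X|$ to produce sufficiently generic sections that keep the induction running while absorbing the contribution of the $s$ high-weight coordinates into constants depending only on $r, s$. For K-stability, I would invoke the main result of \cite{SZ-rigid-imply-stable}, which converts birational superrigidity of an index-one Fano variety into K-stability provided an auxiliary $\alpha$-invariant lower bound; the required bound $\alpha(X) \ge \tfrac{n}{n+1}$ follows from the same secant-variety machinery applied to effective (rather than movable) anticanonical divisors, and $M$ is enlarged if necessary so that both assertions hold simultaneously.
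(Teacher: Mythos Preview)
Your overall framework (Noether--Fano plus multiplicity bounds, then \cite{SZ-rigid-imply-stable} for K-stability) matches the paper's, but the decisive step is different and your version has a real gap.

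The gap is the pointwise multiplicity estimate feeding into Corti's $4n^2$-inequality. The secant/tangent-cone technique of Pukhlikov--Suzuki does \emph{not} give $\mult_x(D_1\cdot D_2)\le 4n^2$ at an individual smooth point; already for ordinary smooth complete intersections of codimension $r\ge 2$ it only yields $\mult_S Z\le 1$ (for $Z\sim_\bQ H^2\cap[X]$) along subvarieties $S$ of dimension at least some constant $N=N(r)$, not along points. The paper's Lemma~\ref{lem:mult-bound-quasi-smooth} is exactly this: in the weighted case one gets $\mult_S Z\le 1$ for $\dim S\ge N(r,s)$, proved by an induction on $s$ that slices by the hypersurface $(y_s=0)$. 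There is no mechanism producing a decaying factor $\phi(m,r,s)$ with $\phi\cdot H^n\to 0$; the bound is simply $\le 1$ but only in codimension $\le \dim X - N$. So after your reduction you are left with a point $x$ at which you have no usable bound on $\mult_x(M^2)$, and the $4n^2$-argument cannot close.

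The paper circumvents this by replacing Corti's inequality with the $h^0$-versus-colength criterion of \cite[Corollary~1.8]{Z-cpi}. From the multiplicity bound one knows $(X,M)$ is canonical and $(X,2M)$ is lc outside a set of dimension $<N$; cutting by $N$ general members of the base-point-free $|H|$ through $x$ produces a complete intersection $Y$ with $(Y,2M|_Y)$ lc away from $x$. Then \cite[Corollary~1.8]{Z-cpi} says $(Y,M|_Y)$ is lc at $x$ provided $h^0(Y,K_Y+2M|_Y)\le c^c/c!$ where $c=\dim Y=m+s-1-r-N$; since $h^0$ grows polynomially in $m$ while $c^c/c!$ grows super-exponentially, this holds once $m\ge M(r,s)$. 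Inversion of adjunction gives $(X,M)$ canonical at $x$. For K-stability the paper only needs (and only proves) $\alpha(X)>\tfrac12$, obtained by the same argument applied to effective $D\sim_\bQ -K_X$; your target $\alpha\ge \tfrac{n}{n+1}$ is stronger than necessary and not a consequence of the machinery here.
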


We refer to \cite{Tian-K-stability-defn,Don-K-stability-defn} for the definition of K-stability in the above statement. In our cases, K-stability is a direct consequence of the birational superrigidity by a simple application of \cite{SZ-rigid-imply-stable}. Note that Theorem \ref{thm:weighted cpi} generalizes the results of \cite{Puk-double,Puk-iterated} by allowing more general weighted complete intersections and removing the generality assumptions (albeit at the cost of increasing dimensions).

Next we handle singularities that may appear on $\cX_{\origin}$ (where $\origin=(0,0)\in\bA^2$). To do so we resolve the singularities by weighted blowups and analyze the maximal singularities on the resolution. As an illustration and application of this technique, we generalize a result of \cite{LZ-singular-cpi} and prove an optimal result about the birational rigidity and K-stability of index $1$ hypersurfaces with ordinary singularities in large dimension:

\begin{thm} \label{thm:hypersurface}
Let $X\subseteq \bP^{n+1}$ be a hypersurface of degree $n+1$ and dimension $n\ge 250$ with only isolated ordinary singularities $($i.e. the projective tangent cones are smooth$)$ of multiplicities at most $m$. Then
    \begin{enumerate}
        \item $X$ is K-stable if $m\le n$;
        \item $X$ is birationally superrigid if $m\le n-2$;
        \item $X$ is birationally rigid if $m\le n-1$. Moreover, linear projection from each point $x\in X$ of multiplicity $n-1$ induces a birational involution $\tau_x$ and the birational automorphism group $\Bir(X)$ of $X$ is generated by $\Aut(X)$ together with these $\tau_x$. 
    \end{enumerate}
\end{thm}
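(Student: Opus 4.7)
The plan is to run the method of maximal singularities (Noether-Fano-Iskovskikh) on $X$, using the resolution of the ordinary singularities. Suppose $X$ is not birationally superrigid; then there exists a movable linear system $\mathcal{M}\subseteq|n_0 H|$ with $(X,\tfrac{1}{n_0}\mathcal{M})$ not canonical, and I analyze a center $Z$ of maximal singularity, distinguishing whether $Z$ lies in the smooth locus of $X$ or contains a singular point $x$ of multiplicity $m_x\le m$.

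If $Z\subseteq X_{\mathrm{sm}}$, I apply Pukhlikov's $4n^2$-inequality exactly as in the smooth hypersurface case: combined with the degree identity $H^n=n+1$ and restriction to general hyperplane sections, it yields a contradiction for $n\gg 0$ (the cycle-theoretic estimates are unaffected by singularities elsewhere on $X$). If $Z$ contains a singular point $x$, the ordinary hypothesis gives that a single blowup $\pi:\widetilde{X}\to X$ at $x$ resolves the singularity and the exceptional divisor $E$ is identified with the smooth projective tangent cone, a degree-$m_x$ hypersurface in $\mathbb{P}^n$. A standard adjunction computation on the blowup of the ambient $\mathbb{P}^{n+1}$ yields the discrepancy $a(E;X)=n-m_x$. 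If $E$ is itself a maximal singularity, then $\mathrm{mult}_E\mathcal{M}>(n-m_x)n_0$; otherwise one passes to a higher center on $\widetilde{X}$ and uses the smooth-case machinery there. Writing $\pi^*\mathcal{M}=\widetilde{\mathcal{M}}+\mu E$ and combining the bound on $\mu$ with the intersection numbers $E^k\cdot(\pi^*H)^{n-k}$ (polynomials in $m_x$) and $H^n=n+1$, I obtain a contradiction precisely when $m_x\le n-2$, proving part~(2).

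For part~(3), when $m_x=n-1$ the discrepancy $a(E;X)=1$ is borderline and $E$ can genuinely be a maximal singularity; indeed a general line through such a point $x$ meets $X$ in exactly two further points, so linear projection from $x$ has degree $2$ and induces the birational involution $\tau_x$. I then run the untwisting procedure: compute the action of $\tau_x$ on $\Pic(X)$ and on $\mathrm{mult}_x\mathcal{M}$ and verify that replacing $\mathcal{M}$ by $\tau_x^{-1}\mathcal{M}$ strictly decreases the positive integer $n_0$. The process terminates in a system with no maximal singularities, whose induced birational map is an automorphism by the superrigid analysis, giving birational rigidity and the description $\Bir(X)=\langle\Aut(X),\{\tau_x\}\rangle$. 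For part~(1), \cite{SZ-rigid-imply-stable} upgrades superrigidity to K-stability in the range $m\le n-2$; for the borderline cases $m\in\{n-1,n\}$, the same local estimates at singular points, combined with an alpha-invariant computation on the smooth locus, give a lower bound $\delta(X)>1$, hence K-stability.

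The hard part will be the intersection-theoretic bookkeeping on $\widetilde{X}$: the estimates must be sharp enough to separate $m_x\le n-2$ from $m_x=n-1$ without losing any constant, and higher-codimensional maximal centers meeting the singular locus force one to combine the discrepancy contribution along $E$ with Pukhlikov-type multiplicity bounds along the horizontal center simultaneously. The dimension bound $n\ge 250$ enters precisely to absorb the lower-order losses in these combined inequalities and to make the borderline $\delta$-invariant estimate in part~(1) succeed uniformly.
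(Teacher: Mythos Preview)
Your outline for parts (2) and (3) has the right skeleton (Noether--Fano, single blowup at the ordinary singular point, discrepancy $a(E;X)=n-m_x$, and the numerical inequality $(n-m_x)^2 m_x < n+1$ forcing $m_x=n-1$ when $E$ itself is the maximal center), but the phrase ``otherwise one passes to a higher center on $\widetilde X$ and uses the smooth-case machinery there'' hides the actual difficulty. The centers you must exclude are subvarieties $K\subsetneq E$, and $\widetilde X$ is not a projective hypersurface, so neither de~Fernex's nor Pukhlikov's smooth-hypersurface argument applies directly. The paper handles this with a separate lemma (Lemma~\ref{lem:canonical on blowup}): one decomposes the cycle $Z=\widetilde M^2$ as $Z_1+Z_2$ according to whether components lie in $E$, writes $Z_1\sim_\bQ -bE^2$, and uses the nefness of $\pi^*H-E$ to get $b+c^2\le (n+1)/m_0$ where $c=\ord_E M$. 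This, together with \cite[Proposition~5]{Puk-hypersurface} applied to the smooth hypersurface $E\subseteq\bP^n$, yields $\mult_y\widetilde M^2<\tfrac{9}{4}$ off a set of dimension $\le 2$; then \cite[Theorem~3.1]{Corti-4n^2-ineq} gives $(Y,\tfrac{4}{3}\widetilde M)$ klt there, codimension-2 centers are excluded by the simultaneous bounds $b>1$, $c>1$ contradicting $b+c^2<2$, and the remaining low-dimensional bad locus is handled via \cite[Theorem~3.3]{Z-cpi} and an $h^0$ estimate (this is precisely where $n\ge 250$ enters). Your ``combine discrepancy along $E$ with Pukhlikov-type bounds along the horizontal center'' suggests a tower-of-blowups count instead; that is a different route and it is not clear it closes without the $Z_1+Z_2$ decomposition.

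Your treatment of part~(1) has a genuine error. The result of \cite{SZ-rigid-imply-stable} does \emph{not} upgrade birational superrigidity to K-stability by itself: one also needs $\alpha(X)>\tfrac{1}{2}$, i.e.\ $(X,\tfrac{1}{2}D)$ klt for every effective $D\sim_\bQ -K_X$, and this requires its own argument at the singular points (Lemma~\ref{lem:K-stable mult<=n-1}). More seriously, for $m=n-1$ the variety is only birationally rigid, and for $m=n$ not even that is claimed, so superrigidity is unavailable. The paper instead uses \cite[Theorem~1.2]{SZ-rigid-imply-stable} for $m\le n-1$ (which needs only $(X,M)$ \emph{log canonical} for movable $M$, plus $\alpha>\tfrac12$), and for $m=n$ invokes the quite different criterion \cite[Theorem~1.5]{Z-cpi}, verifying that $(X,\tfrac{n}{n+1}(\tfrac{1}{n}D+\tfrac{n-1}{n}M))$ is klt via a delicate case analysis on the blowup (Lemmas~\ref{lem:D/n+(1-1/n)M lc}--\ref{lem:D_0/n lc}). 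A direct ``$\delta(X)>1$'' computation as you suggest would amount to the same thing, but your sketch gives no indication of how to control the mixed boundary $\tfrac{1}{n}D+\tfrac{n-1}{n}M$ at a point of multiplicity $n$, which is the crux.
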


This paper is organized as follows. Birational superrigidity can be characterized by the singularities of movable boundaries and its openness is closely related to the semi-continuity of higher codimensional alpha invariants recently introduced by \cite{Zhu-higher-alpha}, thus in \S \ref{sec:higher alpha} we study some properties of these invariants which may be of independent interest. Theorems \ref{thm:weighted cpi} and \ref{thm:hypersurface} are proved in \S \ref{sec:weighted cpi} and \S \ref{sec:hypersurface} respectively. In \S \ref{sec:counterexample}, we prove Theorem \ref{thm:counterexample} and present a few further questions. Finally in the appendix, we prove that the alpha invariant function is constructible in some special cases.

\subsection*{Notation and conventions}

We work over $\bC$. Unless otherwise specified, all varieties are assumed to be normal and divisors are understood as $\bQ$-divisor. A \emph{boundary} on a variety $X$ is defined as an expression of the form $a\cM$ where $a\in\bQ$ and $\cM$ is a linear system on $X$ (this includes the case when $\cM$ is just a divisor). It is said to be \emph{movable} (resp. $\bQ$-Cartier) if $\cM$ is movable (resp. $\bQ$-Cartier). A \emph{pair} $(X,\Delta)$ consists of a variety $X$ and an effective divisor $\Delta\subseteq X$ such that $K_X+\Delta$ is $\bQ$-Cartier. The notions of terminal, canonical, klt and log canonical (lc) singularities are defined in the sense of \cite{Kol-sing-of-pair}. Let $(X,\Delta)$ be a pair and $M$ a $\bQ$-Cartier boundary (resp. an ideal sheaf, a subscheme) on $X$, the \emph{log canonical threshold} of $M$ with respect to $(X,\Delta)$ is denoted by $\lct(X,\Delta;M)$ (or simply $\lct(X;M)$ when $\Delta=0$).

\subsection*{Acknowledgement}

The author would like to thank his advisor J\'anos Koll\'ar for constant support, encouragement and numerous inspiring conversations. He also wishes to thank Hamid Ahmadinezhad, Ivan Cheltsov, Takuzo Okada, Jihun Park and Vyacheslav V. Shokurov for helpful discussions and for pointing out several useful references and Asher Auel, Yuchen Liu and Ziwen Zhu for helpful conversations.

\section{Higher codimensional alpha invariants} \label{sec:higher alpha}

In this section, we study properties of higher codimensional alpha invariants and construct examples to show that they are not lower semi-continuous in general. This is of independent interest and the results are not used until the end of the note, so readers who are mainly interested in the proof of our main theorems may feel free to skip this section.

\begin{defn} \label{defn:alpha^(k)}
Let $(X,\Delta)$ be a klt pair and $L$ an ample line bundle on $X$. Let $1\le k \le n=\dim X$ be an integer. We define the alpha invariant of codimension $k$ for $(X,\Delta;L)$ to be
\begin{equation} \label{eq:alpha^(k)}
    \alpha^{(k)}(X,\Delta;L)=\inf_{m\ge 1} \alpha_m^{(k)}(X,\Delta;L)
\end{equation}
where $\alpha_m^{(k)}(X,\Delta;L)$, the $m$-th alpha invariant of codimension $k$, is defined as the infimum of $\lct(X,\Delta;\frac{1}{m}\cM)$ over all sub linear systems $\cM\subseteq |mL|$ whose base locus has codimension at least $k$.
\end{defn}

In particular, when $k=1$ this reduces to Tian's alpha invariant \cite[\S 5]{Tian-alpha-defn} and when $X$ is $\bQ$-Fano (i.e. $X$ is klt and $-K_X$ is ample), $\Delta=0$ and $L=-K_X$ (the definition of alpha invariants also makes sense when $L$ is only a $\bQ$-line bundle), this reduces to \cite[Definition 1.1]{Zhu-higher-alpha} and in this case we denote $\alpha^{(k)}(X,\Delta;L)$ by $\alpha^{(k)}(X)$. It is also not hard to see that the infimum in \eqref{eq:alpha^(k)} can be replaced by taking the limit. We note that this convergence $\alpha_m^{(k)}\rightarrow \alpha^{(k)}$ is actually uniform in any given family:

\begin{prop} \label{prop:uniform convergence of alpha}
Let $n,r>0$ be integers and let $\lambda,\epsilon>0$. Then there exists an integer $M$ depending only on $n,r,\lambda$ and $\epsilon$ with the following property: for any klt pair $(X,\Delta)$ and any ample line bundle $L$ such that $X$ has dimension $n$, $rL$ is globally generated, $rL-(K_X+\Delta)$ is ample and $\alpha^{(n)}(X,\Delta;L)<\lambda$, we have
\[\alpha^{(k)}(X,\Delta;L)\le \alpha_m^{(k)}(X,\Delta;L)\le \alpha^{(k)}(X,\Delta;L)+\epsilon\]
for all $1\le k \le n$ and all $m\ge M$.
\end{prop}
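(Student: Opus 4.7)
The lower bound \(\alpha^{(k)}\le \alpha_m^{(k)}\) is immediate from the definition. For the upper bound, the plan is to combine a bridging construction (promoting a near-optimal witness at degree \(m_0\) to a witness at every large degree \(m\)) with a uniform bound on \(m_0\) coming from boundedness of the family.

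\emph{Bridging.} Given any \(\cM_0\subseteq |m_0L|\) with \(\operatorname{codim}\Bs\cM_0\ge k\) and \(c_0:=\lct(X,\Delta;\tfrac{1}{m_0}\cM_0)\), and any \(m\ge m_0+N_0\) (where \(N_0=N_0(n,r)\) is an effective basepoint-freeness bound ensuring \(|sL|\) is basepoint-free for every \(s\ge N_0\), available under our hypotheses by Koll\'ar-type theorems applied to \(rL-(K_X+\Delta)\) ample), I would write \(m=qm_0+s\) with \(s\in [N_0,N_0+m_0)\) and define
\[\cM_m := q\cdot \cM_0 + |sL| \;\subseteq\; |mL|,\]
with \(q\cdot \cM_0\) the \(q\)-fold Minkowski sum. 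Basepoint-freeness of \(|sL|\) gives \(\Bs\cM_m=\Bs\cM_0\) (still of codimension \(\ge k\)) and base ideal \(\fb(\cM_m)=\fb(\cM_0)^q\), whence
\[\lct\bigl(X,\Delta;\tfrac{1}{m}\cM_m\bigr) = \tfrac{m}{qm_0}\,c_0 \le c_0 + \tfrac{\epsilon}{2}\]
provided \(m\ge 2c_0(N_0+m_0)/\epsilon\). Under the hypothesis \(\alpha^{(n)}<\lambda\), any near-optimal \(\cM_0\) has \(c_0<\lambda\), so the threshold becomes \(M_1:=2\lambda(N_0+m_0)/\epsilon\).

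\emph{Uniform \(m_0\).} The conditions \(L\) ample, \(rL\) basepoint-free, and \(rL-(K_X+\Delta)\) ample place the triples \((X,\Delta,L)\) in a bounded family (BAB-type boundedness, after Birkar). Parametrize this by a Noetherian scheme \(S\). For each fixed \(m\), the function \(\alpha_m^{(k)}\colon S\to\bR\) is constructible (via constructibility of log canonical thresholds in families plus the finite type of the Hilbert scheme of sub-linear systems of \(|mL|\) with base locus of codimension \(\ge k\)), and consequently \(\alpha^{(k)}=\inf_m\alpha_m^{(k)}\) is lower semi-continuous on \(S\) as an infimum of lower semi-continuous functions. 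I would then perform Noetherian induction on closed subsets of \(S\): at a generic point \(\eta\) of an irreducible stratum, pick \(m_0(\eta)\) with \(\alpha_{m_0(\eta)}^{(k)}(\eta)\le \alpha^{(k)}(\eta)+\epsilon/4\); lower semi-continuity of \(\alpha^{(k)}\) combined with constructibility of \(\alpha_{m_0(\eta)}^{(k)}\) shows the same \(m_0(\eta)\) continues to work on an open neighborhood of \(\eta\), and the inductive hypothesis handles the proper closed complement. Taking a maximum over \(k\in\{1,\dots,n\}\) absorbs the \(k\)-dependence.

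Combining both parts, for \(m\ge M:=\max(M_1,\,m_0+N_0)\) one obtains \(\alpha_m^{(k)}\le c_0+\epsilon/2 < \alpha^{(k)}+\epsilon\). The main obstacle is the uniform bound on \(m_0\): since \(\alpha^{(k)}\) is only a pointwise infimum of constructible functions (and not itself constructible), the Noetherian induction works only by simultaneously exploiting the monotonicity \(\alpha_{qm'}^{(k)}\le \alpha_{m'}^{(k)}\) (Minkowski \(q\)-fold sums), the lower semi-continuity of \(\alpha^{(k)}\), and the constructibility of each individual \(\alpha_m^{(k)}\).
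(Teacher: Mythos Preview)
Your approach has a fatal gap in the ``uniform $m_0$'' step. You assert that $\alpha^{(k)}=\inf_m \alpha_m^{(k)}$ is lower semi-continuous ``as an infimum of lower semi-continuous functions,'' but this is backwards: it is the \emph{supremum} of lsc functions that is lsc, not the infimum. In fact the paper proves (Proposition~\ref{prop:alpha and seshadri} and Remark~\ref{rem:non lsc example}) that $\alpha^{(k)}$ is \emph{not} lower semi-continuous in families for $k\ge 2$, so your Noetherian induction cannot proceed: at a point $t$ near $\eta$ where $\alpha^{(k)}(t)$ drops, the fixed $m_0(\eta)$ need not give $\alpha_{m_0}^{(k)}(t)\le \alpha^{(k)}(t)+\epsilon/2$. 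The boundedness claim is also unjustified: the hypotheses place no upper bound on $(L^n)$ (polarized K3 surfaces of arbitrary degree satisfy them with $\Delta=0$), so there is no bounded family over which to run the induction.

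The paper's proof sidesteps both issues with a direct multiplier-ideal argument in which the degree $\ell$ of the near-optimal witness \emph{disappears} from the final estimate. Given $\cM\subseteq|\ell L|$ with $\lct(X,\Delta;\tfrac{1}{\ell}\cM)\le\alpha+\tfrac{\epsilon}{2}$ computed by a divisor $E$ of log discrepancy $A$, one forms $\cJ=\cJ(X,\Delta+\tfrac{m}{\ell}\cM)$; Nadel vanishing (from $rL-(K_X+\Delta)$ ample) plus Castelnuovo--Mumford regularity (from $rL$ globally generated) make $\cJ\otimes\cO_X((m+r+nr)L)$ globally generated, producing $\cM'\subseteq|(m+r+nr)L|$ with base locus still of codimension $\ge k$. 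The estimate $\ord_E\cJ> A\cdot\frac{m-\alpha-\epsilon/2}{\alpha+\epsilon/2}$ then yields
\[
\alpha_{m+r+nr}^{(k)}\;\le\;\frac{m+r+nr}{m-\alpha-\epsilon/2}\Bigl(\alpha+\frac{\epsilon}{2}\Bigr),
\]
which depends only on $n,r,\lambda,\epsilon$ and not on $\ell$. This elimination of $\ell$ via the multiplier ideal is precisely the idea your bridging-plus-boundedness strategy is trying to replace, and without it the argument does not close.
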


\begin{proof}
The argument is essentially the same as \cite[Theorem 5.1]{BL-delta-lsc}, and we reproduce the proof for reader's convenience. 
Let $1\le k\le n$ and let $\alpha=\alpha^{(k)}(X,\Delta;L)$. It is clear from the definition that $\alpha^{(k)}(X,\Delta;L)\le \alpha^{(n)}(X,\Delta;L)$, thus $\alpha<\lambda$ by assumption. Again by definition, there exists an integer $\ell$ and a linear system $\cM\subseteq|\ell L|$ whose base locus has codimension at least $k$ such that
\[\alpha\le \lct(X,\Delta;\frac{1}{\ell}\cM) \le \alpha+\frac{\epsilon}{2}.\]
Let $E$ be an exceptional divisor over $X$ that computes $\lct(X,\Delta;\frac{1}{\ell}\cM)$ and let $A=a(E;X,\Delta)+1$ be the log discrepancy of $E$ with respect to the pair $(X,\Delta)$, then we have $\ord_E(\frac{1}{\ell}\cM) \ge \frac{A}{\alpha+\frac{\epsilon}{2}}$ and hence
\begin{eqnarray*}
\ord_E \cJ(X,\Delta+\frac{m}{\ell}\cM) & \ge & \lfloor \ord_E(\frac{m}{\ell}\cM) - a(E;X,\Delta) \rfloor \\
 & > & \ord_E(\frac{m}{\ell}\cM) - A \\
 & \ge & A\cdot \frac{m-\alpha-\frac{\epsilon}{2}}{\alpha+\frac{\epsilon}{2}}.
\end{eqnarray*}
where $\cJ(X,\Delta+\frac{m}{\ell}\cM)$ denotes the multiplier ideal of the pair. Since $rL-(K_X+\Delta)$ is ample and $\frac{m}{\ell}\cM\sim_\bQ mL$, we have $H^i(X,\cJ(X,\Delta+\frac{m}{\ell}\cM)\otimes\cO_X((m+r)L))=0$ by Nadel vanishing. As $rL$ is globally generated, we see that $\cJ(X,\Delta+\frac{m}{\ell}\cM)\otimes\cO_X((m+r+nr)L)$ is also globally generated by Castelnuovo-Mumford regularity (see e.g. \cite[\S 1.8]{Laz-positivity-1}). Note that the co-support of $\cJ(X,\Delta+\frac{m}{\ell}\cM)$ has codimension at least $k$ since the same is true for the base locus of $\cM$. Therefore, $\cM'=|\cJ(X,\Delta+\frac{m}{\ell}\cM)\otimes\cO_X((m+r+nr)L)|$ defines a sub linear series of $|(m+r+nr)L|$ whose base locus has codimension at least $k$ and from the previous calculations we have
\begin{eqnarray*}
\alpha_{m+n+nr}^{(k)}(X,\Delta;L) & \le & \lct(X,\Delta;\frac{1}{m+n+nr}\cM') \\
 & \le & \frac{(m+r+nr)A}{\ord_E \cJ(X,\Delta+\frac{m}{\ell}\cM)} \\
 & < & \frac{m+r+nr}{m-\alpha-\frac{\epsilon}{2}} (\alpha+\frac{\epsilon}{2})
\end{eqnarray*}
It is clear that there exists $M$ depending only on $n,r,\lambda$ and $\epsilon$ such that the last term of the above inequality is smaller than $\alpha+\epsilon$ whenever $m\ge M$ and $\alpha<\lambda$. Replacing $M$ by $M+r+nr$, it follows that $\alpha_m^{(k)}(X,\Delta;L)<\alpha+\epsilon$ for all $1\le k \le n$ and $m\ge M$.
\end{proof}

\begin{cor} 
Let $f:(X,\Delta)\rightarrow T$ be a $\bQ$-Gorenstein flat family of klt pairs $($in the sense of e.g. \cite[2.3]{BL-delta-lsc}$)$ and let $L$ be an $f$-ample line bundle on $X$. Let $\epsilon>0$. Then there exists an integer $M>0$ such that
\[0\le \alpha_m^{(k)}(X_t,\Delta_t;L_t)-\alpha^{(k)}(X_t,\Delta_t;L_t)\le \epsilon\]
for all $m\ge M$ and $t\in T$.
\end{cor}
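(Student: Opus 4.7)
The plan is to apply Proposition \ref{prop:uniform convergence of alpha} fiberwise, with parameters $n$, $r$, $\lambda$ chosen uniformly in $t\in T$ (and $\epsilon$ as given). Since $f$ is flat, $\dim X_t$ is locally constant, so after restricting to a connected component (sufficient for proving the corollary) we may treat the fiber dimension $n$ as fixed.

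For the parameter $r$: since $L$ is $f$-ample and $K_X+\Delta$ is $f$-$\bQ$-Cartier by the $\bQ$-Gorenstein hypothesis, for all sufficiently large integers $r$ the line bundle $rL$ is $f$-very ample on $X$ and $rL-(K_X+\Delta)$ is $f$-ample. Restricted to each fiber, this yields that $rL_t$ is very ample (in particular globally generated) and $rL_t-(K_{X_t}+\Delta_t)$ is ample, matching the first two hypotheses of Proposition \ref{prop:uniform convergence of alpha}.

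The substantive step is producing the uniform upper bound $\lambda$ on $\alpha^{(n)}(X_t,\Delta_t;L_t)$. For each $t$, I would pick a smooth point $x\in X_t\setminus\Supp\Delta_t$; such points form a dense open subset because $X_t$ is normal and klt and $\Delta_t$ is an effective divisor. Let $\cM\subseteq|rL_t|$ be the sub-linear system of divisors passing through $x$. Since $rL_t$ is very ample, hyperplane sections through $x$ have base locus equal to $\{x\}$, which has codimension $n$ in $X_t$. A general member $D\in\cM$ satisfies $\mult_x D=1$, so if $\pi:Y\to X_t$ denotes the blowup at $x$ with exceptional divisor $E$, then $a(E;X_t,\Delta_t)=n-1$ and $\ord_E(\tfrac{1}{r}\cM)=\tfrac{1}{r}$. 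The discrepancy inequality gives
\[\lct\bigl(X_t,\Delta_t;\tfrac{1}{r}\cM\bigr)\le \frac{a(E;X_t,\Delta_t)+1}{\ord_E(\tfrac{1}{r}\cM)}=rn,\]
hence $\alpha^{(n)}(X_t,\Delta_t;L_t)\le rn$, and one sets $\lambda:=rn+1$.

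With $n$, $r$, $\lambda$ chosen uniformly in $t$, Proposition \ref{prop:uniform convergence of alpha} supplies the desired integer $M$. The main point to watch is really the uniform upper bound on $\alpha^{(n)}$; the rest reduces to standard relative positivity and the constancy of fiber dimension in a flat family. One mild subtlety to keep in mind is that the construction of a smooth point off $\Supp\Delta_t$ with the expected log discrepancy requires the klt (hence normal) hypothesis on each fiber, so that the above blowup computation really does apply on every fiber.
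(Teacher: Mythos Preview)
Your proposal is correct and follows essentially the same approach as the paper: choose $r$ so that $rL_t$ is very ample and $rL_t-(K_{X_t}+\Delta_t)$ is ample, bound $\alpha^{(n)}(X_t,\Delta_t;L_t)\le nr$ via the linear system of members of $|rL_t|$ through a smooth point off $\Delta_t$, set $\lambda=nr+1$, and invoke Proposition~\ref{prop:uniform convergence of alpha}. The only cosmetic difference is that the paper states $\lct(X_t,\Delta_t;\cM)=n$ directly, whereas you reach the bound $\le nr$ via the blowup discrepancy computation; the content is the same.
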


\begin{proof}
Since $L$ is $f$-ample, we may choose $r\in\bZ_{>0}$ such that $rL_t$ is very ample and $rL-(K_X+\Delta)$ is $f$-ample for all $t\in T$. It is then not hard to see that $\alpha^{(n)}(X_t,\Delta_t;L_t)\le nr$ ($\forall t\in T$) since for any smooth point $x\in X_t\backslash\Delta_t$, the linear system $\cM=|rL_t\otimes \mathfrak{m}_x|$ is base point free outside $x$ and $\lct(X_t,\Delta_t;\cM)=n$. The result then follows from Proposition \ref{prop:uniform convergence of alpha} by taking $\lambda=nr+1$.
\end{proof}

In particular, as the function $t\mapsto \alpha_m(X_t,\Delta_t;L_t)$ on $T$ is lower semi-continuous for each $m\ge 1$, we obtain the lower semi-continuity of the alpha invariant function \cite[Theorem B]{BL-delta-lsc}. In contrast, the higher codimensional variants $\alpha^{(k)}(X_t,\Delta_t;L_t)$ are in general not lower semi-continuous, as suggested by the following statement (here $\epsilon(L)$ is the Seshadri constant of $L$ at a very general point; see \cite[\S 5]{Laz-positivity-1} for the definition and elementary properties of Seshadri constants):

\begin{prop} \label{prop:alpha and seshadri}
Let $X$ be a smooth projective variety of dimension $n$ and $L$ an ample line bundle on $X$. Then $\alpha^{(n)}(L)\ge \frac{n}{\sqrt[n]{(L^n)}}$ with equality if and only if $\epsilon(L)=\sqrt[n]{(L^n)}$.
\end{prop}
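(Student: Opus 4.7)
My plan is to apply the de Fernex-Ein-Mustață (dFEM) inequality to the base ideal of each competing sub linear system, then compare the resulting bound with the Seshadri constant via the intersection of curves with generic members.

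For the lower bound, any $\cM \subseteq |mL|$ with $0$-dimensional base locus has base ideal $\cJ_\cM$ that is $\fm_x$-primary at each base point $x$. The local Hilbert-Samuel multiplicities satisfy $\sum_x e(\cJ_\cM;x) \le m^n(L^n)$, since the sum is bounded by the intersection number of $n$ generic members of $\cM$. Combined with the dFEM inequality $\lct_x(X;\cJ_\cM) \ge n/e(\cJ_\cM;x)^{1/n}$ on smooth $X$, this gives
\[
\lct\!\left(X;\tfrac{1}{m}\cM\right) \;=\; m\,\lct(X;\cJ_\cM) \;\ge\; \frac{mn}{\max_x e(\cJ_\cM;x)^{1/n}} \;\ge\; \frac{n}{\sqrt[n]{(L^n)}}.
\]

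For the sufficient direction, assume $\epsilon(L) = \sqrt[n]{(L^n)}$, fix a small $\delta > 0$, and pick a very general point $x$ (so $\epsilon(L;x) = \epsilon(L)$). On the blowup $\pi:\tX \to X$ of $x$, the divisor $\pi^*L - (\epsilon(L) - \delta)E$ is strictly nef (every curve in $\tX$ meets it positively by the curve characterization of $\epsilon(L;x)$) and has positive top self-intersection, hence is ample by Kleiman; some positive multiple is therefore basepoint-free. For $m$ sufficiently large and divisible and $k = \lfloor m(\epsilon(L) - \delta)\rfloor$, the sub linear system $\cM = |\fm_x^k \otimes \cO_X(mL)|$ has base locus $\{x\}$ with $\cJ_\cM = \fm_x^k$ locally at $x$. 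Then $\lct(X;\tfrac{1}{m}\cM) = mn/k \to n/(\epsilon(L) - \delta)$ as $m \to \infty$, and sending $\delta \to 0$ yields $\alpha^{(n)}(L) \le n/\epsilon(L) = n/\sqrt[n]{(L^n)}$, matching the lower bound.

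For the necessary direction, take an extremal sequence $\cM_i \subseteq |m_iL|$ with $\lct(X;\tfrac{1}{m_i}\cM_i) \to n/\sqrt[n]{(L^n)}$, realized at a base point $x_i$. Saturating the chain of inequalities in the first step forces $e(\cJ_{\cM_i};x_i)/m_i^n \to (L^n)$ and asymptotic equality in dFEM; the equality classification $\bar{\fa} = \overline{\fm_x^{e(\fa)^{1/n}}}$, in quantitative form, gives $\mult_{x_i}(\cJ_{\cM_i})/m_i \to \sqrt[n]{(L^n)}$. The key multiplicity-Seshadri bound enters at this point: for any $\cM \subseteq |mL|$ with $0$-dimensional base locus and any base point $x$, $\mult_x\cM/m \le \epsilon(L;x)$. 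Indeed, any curve $C$ through $x$ cannot lie in the $0$-dimensional base locus, so for generic $D \in \cM$ the intersection $D\cdot C$ is proper at $x$, giving $m(L\cdot C) = D\cdot C \ge \mult_x D \cdot \mult_x C$, and the infimum of $L\cdot C/\mult_x C$ over $C$ through $x$ is $\epsilon(L;x)$. Applied at each $x_i$, $\epsilon(L;x_i) \ge \mult_{x_i}(\cM_i)/m_i \to \sqrt[n]{(L^n)}$; by lower semi-continuity of Seshadri constants, $\epsilon(L) = \epsilon(L; \text{very general}) \ge \epsilon(L;x_i)$ for each $i$, so $\epsilon(L) \ge \sqrt[n]{(L^n)}$. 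Combined with the standard $\epsilon(L) \le \sqrt[n]{(L^n)}$, equality holds.

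The main obstacle is the quantitative equality case of dFEM used in the necessary direction: extracting $\mult_{x_i}(\cJ_{\cM_i})/m_i \to \sqrt[n]{(L^n)}$ from the near-equality $\lct \to n/e^{1/n}$ requires a stability version of the classification $\bar{\fa} = \overline{\fm^{e(\fa)^{1/n}}}$ of dFEM equality, which is the technical heart of the proof.
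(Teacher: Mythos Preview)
Your proposal is correct and follows essentially the same route as the paper: the lower bound via dFEM applied to the base ideal (the paper phrases it via the complete intersection cycle $Z=\cM^n$, but this is the same computation), the sufficient direction via $|mL\otimes\fm_x^{mt}|$ on the blowup, and the necessary direction by saturating the chain of inequalities and invoking a quantitative form of the dFEM equality case to get $\cI\subseteq\fm_x^{m(a-\epsilon)}$ before comparing with curves through $x$. The ``stability version'' of the dFEM equality classification that you flag as the main obstacle is exactly what the paper isolates and proves as a separate lemma (Lemma~\ref{lem:refined dFEM equality}), via degeneration to monomial ideals and a perturbed AM--GM argument on the supporting hyperplane of the Newton polytope.
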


\begin{rem} \label{rem:non lsc example}
Before we prove this proposition, let us explain why it leads to the failure of lower semi-continuity for higher codimensional alpha invariants. Recall that we always have $\epsilon(L)\le \sqrt[n]{(L^n)}$ and the function $t\mapsto \epsilon(L_t)$ is lower semi-continuous in any polarized flat family $f:(X,L)\rightarrow T$. Thus if $f:X\rightarrow T$ is smooth and $L$ is an $f$-ample line bundle such that $\epsilon(L_t)=\sqrt[n]{(L_t^n)}$ at very general $t\in T$ while $\epsilon(L_0)<\sqrt[n]{(L_0^n)}$ on some special fiber $X_0$, then by the above proposition we have
\[\alpha^{(n)}(L_0)> \frac{n}{\sqrt[n]{(L_0^n)}} = \alpha^{(n)}(L_t)\]
and therefore the function $t\mapsto \alpha^{(n)}(L_t)$ is not lower semi-continuous. As a concrete example, let $f:(X,L)\rightarrow T$ be the family of K3 surfaces of degree $16$ (c.f. \cite{Mukai-K3}), then a general member $X_t$ of this family has Picard number one, hence $\epsilon(L_t)=4$ by \cite[Corollary]{Knu-K3-Seshadri}; on the other hand, let $X_0$ be a quartic K3 surface containing a line $\ell$, let $H$ be the hyperplane class and let $C\sim H-\ell$ be the class defining the elliptic fibration, then $L_0=H+2C$ has degree $16$ but $\epsilon(L_0)\le (L_0\cdot C)=3<\sqrt{(L^2)}$.
\end{rem}

\begin{proof}[Proof of Proposition \ref{prop:alpha and seshadri}]
We first prove that $\alpha_m^{(n)}(L)\ge \frac{n}{a}$ for all $m\ge 1$, where $a=\sqrt[n]{(L^n)}$, the inequality for $\alpha^{(n)}(L)$ then follows by taking the limit. Let $\cM\subseteq |mL|$ be a linear system whose base locus supported on isolated points, then we can define the complete intersection subscheme $Z=\cM^n$ by choosing $n$ general member of $\cM$. We claim that $\lct(X;Z)\ge \frac{n}{ma}$. Indeed, let $x\in X$, then we have $\mult_x Z = \ell(\cO_Z) \le (M^n)=m^n a^n$, hence by \cite[Theorem 0.1]{dFEM-mult-and-lct} we have $\lct_x(X;Z)\ge \frac{n}{ma}$ ($\lct_x$ denotes the log canonical threshold at the point $x$) for all $x$ and the claim follows. We then have $\lct(X;\cM)\ge \lct(X;Z)\ge \frac{n}{ma}$ and as $\cM$ is arbitrary, we obtain $\alpha_m^{(n)}(L)\ge \frac{n}{a}$.

Suppose that $\epsilon(L)=a$, then there exists $x\in X$ such that $\epsilon(L;x)=a$ and by definition, $\pi^*L-tE$ is ample for all $0<t<a$ (where $\pi:Y\rightarrow X$ is the blowup of $X$ at $x$ with exceptional divisor $E$). It follows that for $t\in (0,a)\cap \bQ$ and sufficiently large and divisible $m$ (depending on $t$), the linear system $\cM_{m,t}=\pi_*|m(\pi^*L-tE)|=|mL\otimes \mathfrak{m}_x^{mt}|\subseteq |mL|$ is base point free away from $x$ and we get $\alpha^{(n)}(L)\le m\cdot \lct(X;\cM_{m,t})\le m\cdot \frac{n}{mt}=\frac{n}{t}$. Letting $t\rightarrow a$ we see that $\alpha^{(n)}(L)\le\frac{n}{a}$. Since we always have the inequality in the other direction, we obtain the equality $\alpha^{(n)}(L)=\frac{n}{a}$.

Now suppose that $\alpha^{(n)}(L)=\frac{n}{a}$. Then by definition, for each $0<\epsilon_0\ll 1$, there exist some $m\ge 1$ together with some linear system $\cM\subseteq|mL|$ with isolated base points such that $\lct(X,\frac{1}{m}\cM)<\frac{n}{a}+\epsilon_0$. Let $\cI$ be base ideal of $\cM$ (i.e. image of the evaluation map $\cM\otimes L^{-m}\rightarrow \cO_X$), then we have $\lct_x(X;\cI^{1/m})<\frac{n}{a}+\epsilon_0$ for some $x\in X$ and $\mult_x \cI\le m^n(L^n)=m^n a^n$. In particular, $\lct_x(X;\cI)^n\cdot \mult_x \cI\le n^n(1+\epsilon_1)$ for some $0<\epsilon_1\ll 1$ (which depends on $\epsilon_0$). By Lemma \ref{lem:refined dFEM equality}, this implies $\cI\subseteq \fm_x^{m(a-\epsilon)}$ (i.e. inclusion holds after raising both sides to a sufficiently divisible power) for some $0<\epsilon\ll 1$ (which again depends on $\epsilon_0$) and in particular, $\mult_x \cM\ge m(a-\epsilon)$ and since $\cM$ only has isolated base points we have $(L\cdot C) = \frac{1}{m}(\cM\cdot C)\ge (a-\epsilon)\mult_x C$ for any curve $C$ containing $x$. By definition of Seshadri constants, this gives \[\epsilon(L)\ge\epsilon(L;x)=\inf_{x\in C} \frac{(L\cdot C)}{\mult_x C}\ge a-\epsilon.\]
Since $\epsilon$ can be arbitrarily small, we obtain $\epsilon(L)\ge a$. But we always have $\epsilon(L)\le a$, thus equality holds as well.
\end{proof}

The following lemma is used in the above proof.

\begin{lem} \label{lem:refined dFEM equality}
Let $\epsilon\in\bQ_+$, then there exists some $\epsilon_1>0$ such that for every smooth variety $X$ of dimension $n$ and every ideal $\cI\subseteq \cO_X$ co-supported at a point $x\in X$ such that $\lct(X;\cI)^n\cdot \mult_x(\cI)\le n^n(1+\epsilon_1)$, we have $\cI\subseteq \mathfrak{m}_x^{n\mu(1-\epsilon)}$ where $\mu^{-1}=\lct(X;\cI)$.
\end{lem}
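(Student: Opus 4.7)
The plan is to argue by contradiction, reducing to the equality case of the de Fernex--Ein--Musta\c{t}\u{a} inequality~\cite{dFEM-mult-and-lct}: if $\lct(X;\cI)^n\cdot\mult_x(\cI)=n^n$, then the integral closure of $\cI$ equals $\overline{\fm_x^{n\mu}}$, and in particular $\ord_x(\cI)=n\mu$. Since $\ord_x$ is additive on ideal products, the conclusion $\cI\subseteq\fm_x^{n\mu(1-\epsilon)}$ (interpreted after raising both sides to a sufficiently divisible power) is equivalent to $\ord_x(\cI)\ge n\mu(1-\epsilon)$, so the lemma is really a quantitative uniform version of this equality case.

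First I would reduce to the local case $X=\Spec\bC[[t_1,\dots,t_n]]$, since all invariants only depend on the formal completion. Next I would Gr\"obner-degenerate $\cI$ to its generic initial monomial ideal $\cI'\subseteq\bC[t_1,\dots,t_n]$ with respect to the $\ord_x$-filtration; under this degeneration $\mult_x$ and $\ord_x$ are preserved while $\lct$ can only decrease, so $\lct(\cI')^n\mult_x(\cI')\le\lct(\cI)^n\mult_x(\cI)\le n^n(1+\epsilon_1)$ and $\mu(\cI')\ge\mu(\cI)$. Proving the lemma for the monomial ideal $\cI'$ therefore gives $\ord_x(\cI)=\ord_x(\cI')\ge n\mu(\cI')(1-\epsilon)\ge n\mu(\cI)(1-\epsilon)$, which reduces the lemma to the monomial case.

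For a monomial $\fm_0$-primary ideal with Newton polytope $P\subseteq\bR^n_{\ge 0}$, Howald's formula gives $\mu=\lambda^*:=\min\{\lambda>0:(\lambda,\dots,\lambda)\in P\}$, $\mult_x(\cI)=n!\vol(\bR^n_{\ge 0}\setminus P)$, and $\ord_x(\cI)=\min_{v\in P}|v|$. Setting $d:=n\mu$, the hypothesis becomes $\vol(\bR^n_{\ge 0}\setminus P)\le d^n(1+\epsilon_1)/n!$ and the goal is $\min_{v\in P}|v|\ge d(1-\epsilon)$. If some $v^*\in P$ has $|v^*|<d(1-\epsilon)$, then the convexity of $P$ together with $(d/n,\dots,d/n)\in\partial P$ (and the presence of axis vertices making $\cI$ primary) forces the extremal Newton polytope through $v^*$, the diagonal point, and the axes to have complement volume strictly exceeding $d^n(1+\epsilon_1)/n!$ provided $\epsilon_1$ is small in terms of $\epsilon$ and $n$; the extremal two-generator family $\cI=(t_1^{d(1-\epsilon)},t_2^A)$ with $A$ chosen so that $(d/2,d/2)\in\partial P$ shows the sharp scaling $\epsilon_1\asymp\epsilon^2$. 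The main obstacle is twofold: (a) executing the initial-ideal reduction cleanly, in particular preservation of $\mult_x$ and $\ord_x$ and lower semi-continuity of $\lct$ under the Gr\"obner degeneration in the complete local setting, and (b) the quantitative convex-geometric volume estimate uniform in all valid Newton polytopes subject to the two boundary constraints $(d/n,\dots,d/n)\in\partial P$ and $v^*\in P$.
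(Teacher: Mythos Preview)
Your overall strategy---reduce to a monomial ideal and then run a convex-geometric argument on the Newton polytope---is exactly the paper's approach, and your reformulation of the conclusion as $\ord_x(\cI)\ge n\mu(1-\epsilon)$ is correct. The gap is in the reduction step: the claim that $\mult_x$ is preserved under a single Gr\"obner degeneration $\cI\rightsquigarrow\cI'=\rin(\cI)$ is false. What is preserved is the colength $\ell(R/\cI)=\ell(R/\rin(\cI))$, and one always has $\rin(\cI)^m\subseteq\rin(\cI^m)$, hence $\mult_x(\rin(\cI))\ge\mult_x(\cI)$; but strict inequality occurs. For instance, with a local (negative-degree) order on $\bC[x,y]$ the ideal $\cI=(x^2,\,xy+y^3)$ is a parameter ideal with $\mult_x(\cI)=\ell(R/\cI)=6$, while its initial ideal is $(x^2,xy,y^5)$, whose Newton-polytope complement has area $7/2$, so $\mult_x(\rin(\cI))=7$. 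Thus from $\lct(\cI)^n\mult_x(\cI)\le n^n(1+\epsilon_1)$ you cannot conclude the same bound for $\cI'$, and the monomial lemma does not apply to $\cI'$ with the same $\epsilon_1$.

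The paper circumvents this by never passing to a single monomial ideal. Instead it degenerates each power $\cI^{2^r}$ separately and works with the asymptotic Newton polytope $P_\infty=\bigcup_r 2^{-r}P(\rin(\cI^{2^r}))$. The point is that $\mult_x(\cI)=\lim_r n!\,\ell(R/\rin(\cI^{2^r}))/(2^r)^n$, which directly bounds the volume under $P_\infty$ without ever invoking $\mult_x$ of a single initial ideal. For the convex step, rather than arguing by contradiction from a bad vertex $v^*$, the paper takes a supporting hyperplane $\sum a_iu_i=1$ of $P_\infty$ at the diagonal boundary point; the multiplicity bound then reads $(1+\epsilon_1)\prod a_i\ge(\tfrac{1}{n}\sum a_i)^n$, and near-equality in AM--GM forces $a_i\le\frac{1}{n\mu(1-\epsilon)}$ for all $i$, whence the simplex $\{\sum u_i\le n\mu(1-\epsilon)\}$ lies below the hyperplane and hence outside $P_\infty$. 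This yields $\rin(\cI^{2^r})\subseteq\fm^{2^r n\mu(1-\epsilon)}$ and thus $\cI\subseteq\fm^{n\mu(1-\epsilon)}$ directly, making your volume-comparison step (b) unnecessary.
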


\begin{proof}
The argument is similar to those of \cite[Theorem 1.4]{dFEM-mult-and-lct}. We may replace $\cI$ by $\cI^\ell$ for some sufficiently divisible $\ell$ and assume that $\mu(1-\epsilon) \in \bZ$. Since the statement is local, we may also assume that $X=\bA^n$ and $x$ is the origin. In particular, we consider $\cI$ as an ideal in $R=\bC[x_1,\cdots,x_n]$. Let $\fm=\fm_x$. Choose a multiplicative order on the monomials in $R$ and deform all powers $\cI^m$ of $\cI$ to monomial ideals $\rin(\cI^m)$, then $\rin(\cI^m)\cdot \rin(\cI^\ell)\subseteq \rin(\cI^{m+\ell})$ and we have $\cI^m\subseteq \fm^\ell$ if and only if $\rin(\cI^m)\subseteq \fm^\ell$ (for any $m,\ell\in \bN$). As in \cite{dFEM-mult-and-lct}, let $\mu(\cI)=\lct(X;\cI)^{-1}$, let $P(\rin(\cI))\subseteq\bR^n$ be the Newton polytope of $\rin(\cI)$ and let $P_\infty=\cup_{r\in\bN}\frac{1}{2^r}P(\rin(\cI^{2^r}))$. Then by \cite{Howald} and the lower semi-continuity of lct, 
\[\mu(P_\infty)=\lim_{r\rightarrow \infty} \frac{\mu(\rin(\cI^{2^r}))}{2^r}\ge \mu\]
where $\mu(P_\infty)=\inf\{t>0\,|\,(t,t,\cdots,t)\in P_\infty\}$. Let $F=(\sum_{i=1}^n a_i u_i=1)$ be a supporting hyperplane of $P_\infty$ through the point $\mu(P_\infty)\cdot (1,1,\cdots,1)$ and let $S_F=\{u\in\bR^n_{\ge0}\,|\,\sum_{i=1}^n a_i u_i\le 1\}$. Then $\mu\cdot (1,\cdots,1)\in S_F$ and we have 
\[\mult_x(\cI) = \lim_{r\rightarrow \infty}\frac{n!\cdot \ell(R/\cI^{2^r})}{(2^r)^n} = \lim_{r\rightarrow \infty}\frac{n!\cdot \ell(R/\rin(\cI^{2^r}))}{(2^r)^n} \ge n!\Vol(S_F)\ge \prod_{i=1}^n \frac{1}{a_i}.\]
By assumption, $\mult_x(\cI)\le (n\mu)^n(1+\epsilon)$, hence $\prod_{i=1}^n \frac{1}{a_i}\le (n\mu)^n(1+\epsilon_1)$, or \[(1+\epsilon_1)\prod_{i=1}^n a_i\ge \left(\frac{\mu^{-1}}{n}\right)^n \ge \left(\frac{\sum_{i=1}^n a_i}{n}\right)^n.\]
Comparing with the AM-GM inequality, we see that if $\epsilon_1>0$ is chosen to be sufficiently small (which only depends on $\epsilon$), then we have $a_i\le \frac{1}{(1-\epsilon)}\cdot \frac{1}{n} \sum_{i=1}^n a_i\le \frac{1}{n\mu(1-\epsilon)}$. It follows that $\Delta(\mu(1-\epsilon)):=\{u\in\bR^n_{\ge0}\,|\,\sum_{i=1}^n u_i\le n\mu(1-\epsilon)\}\subseteq S_F\subseteq \bR^n_{\ge0}\backslash P_\infty$; in other words, $\rin(\cI^{2^r})\subseteq \fm^{2^r n\mu(1-\epsilon)}$ for all $r\in\bN$. Hence $\cI\subseteq \fm^{n\mu(1-\epsilon)}$.
\end{proof}

\section{Weighted complete intersection} \label{sec:weighted cpi}

In this section we prove Theorem \ref{thm:weighted cpi}. Throughout the section, we denote by $\bP$ the weighted projective space $\bP(1^m,a_1,\cdots,a_s)$ and let $x_1,\cdots,x_m,y_1,\cdots,y_s$ be the weighted homogeneous coordinates. Given a weighted complete intersection $X\subseteq \bP$, we also denote by $\QSing(X)$ the subset along which $X$ is not quasi-smooth and let $\delta_X=\dim \QSing(X)$ (by convention, $\dim(\emptyset)=-1$).

\begin{lem}
Let $X\subseteq \bP$ be a weighted complete intersection of codimension $r$ and let $Y=X\cap (y_s=0)$. Then $\delta_Y\le \delta_X+r$.
\end{lem}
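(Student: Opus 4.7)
My approach is to compare the Jacobian matrices of $X$ and $Y$ and exploit the Euler identity for weighted-homogeneous polynomials. Let $f_1,\ldots,f_r$ be the defining equations of $X\subseteq\bP$, with $\deg f_i=d_i$, let $J_X$ be the $r\times(m+s)$ Jacobian on the affine cone $C_X\subseteq\bA^{m+s}$ with last column $b=(\partial f_i/\partial y_s)_{i=1}^r$, and let $A$ be the submatrix of $J_X$ obtained by deleting $b$. Then $\QSing_{\mathrm{cone}}(X)=\{q\in C_X:\mathrm{rank}\,J_X(q)<r\}$, while $\QSing_{\mathrm{cone}}(Y)=\{q\in C_X\cap\{y_s=0\}:\mathrm{rank}\,A(q)<r\}$. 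Setting $Z=\{q\in C_X:\mathrm{rank}\,A(q)<r\}$, we have $\QSing_{\mathrm{cone}}(X)\subseteq Z$ and $\QSing_{\mathrm{cone}}(Y)=Z\cap\{y_s=0\}$, so the lemma reduces to showing $\dim(Z\cap\{y_s=0\})\le\delta_X+r+1$ inside the affine cone.

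The crucial observation is that $B:=Z\setminus\QSing_{\mathrm{cone}}(X)$ is automatically contained in $\{y_s=0\}$, by an Euler-identity argument. At each $q\in B$ we have $\mathrm{rank}\,A(q)=r-1$ while $\mathrm{rank}\,J_X(q)=r$, so the left nullspace of $A(q)$ is $1$-dimensional, spanned by some $\lambda(q)\in\bC^r$ with $\lambda(q)^Tb(q)\neq 0$. Applying the weighted Euler identity $\sum_j x_j\partial_{x_j}f_i+\sum_k a_ky_k\partial_{y_k}f_i=d_if_i$ for each $i$, forming the $\lambda_i(q)$-linear combination, and using $f_i(q)=0$ together with $\lambda(q)^TA(q)=0$, all contributions to the left-hand side except the $y_s$-term vanish, leaving $a_sy_s(q)\cdot\lambda(q)^Tb(q)=0$. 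Since $a_s>0$ and $\lambda(q)^Tb(q)\neq 0$, this forces $y_s(q)=0$. Hence $Z\cap\{y_s=0\}=(\QSing_{\mathrm{cone}}(X)\cap\{y_s=0\})\cup B$, and as the first summand has cone-dimension at most $\delta_X+1$, it remains to bound $\dim B\le\delta_X+r+1$.

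I would control $\dim B$ via the morphism $\Phi:B\to\bP^{r-1}$, $q\mapsto[\lambda(q)]$. The image of $\Phi$ contributes at most $r-1$ to the dimension; each fiber $\Phi^{-1}([\lambda])$ consists of $q\in C_Y$ where the gradient of $\bar g_\lambda:=\sum_i\lambda_i(f_i|_{y_s=0})$ vanishes while $\lambda^Tb(q)\neq 0$. A Bertini-type analysis of the linear pencil $\{\bar g_\lambda\}_{[\lambda]\in\bP^{r-1}}$ on $C_Y$, together with a secondary rank-stratification of $J_X$ along $\QSing_{\mathrm{cone}}(X)$, bounds each such fiber by $\delta_X+2$ in the cone, giving $\dim B\le(r-1)+(\delta_X+2)=\delta_X+r+1$ and hence $\delta_Y\le\delta_X+r$. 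The fiber-dimension estimate is the main technical obstacle of the argument, where one must carefully track how critical points of $\bar g_\lambda$ on $C_Y$ with $\lambda^Tb\neq 0$ are pinned down by the non-quasi-smooth locus of $X$.
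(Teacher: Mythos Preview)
Your Euler-identity argument showing $B\subseteq\{y_s=0\}$ is correct and pleasant, but the proposal has a genuine gap at the step you yourself flag as ``the main technical obstacle'': the fiber-dimension bound $\dim\Phi^{-1}([\lambda])\le\delta_X+2$ is never established, and the phrases ``Bertini-type analysis'' and ``secondary rank-stratification'' do not amount to an argument. In fact, for $r=1$ the target $\bP^{r-1}$ is a single point, so the unique fiber is all of $B$ and the ``fiber bound'' is literally the statement $\delta_Y\le\delta_X+1$ you are trying to prove; the map $\Phi$ contributes nothing. For $r\ge2$ a Bertini statement would at best control fibers over \emph{generic} $[\lambda]$, not all of them, and there is no visible mechanism linking the critical locus of a fixed $\bar g_\lambda$ on $C_Y$ to $\delta_X$.

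The paper's proof bypasses all of this with a one-line observation that is already hidden in your setup. You noted that $\lambda(q)^Tb(q)\neq0$ for every $q\in B$; in particular $b(q)\neq0$, i.e.\ $B\cap V(b_1,\ldots,b_r)=\emptyset$. Combined with your decomposition this gives
\[
\QSing(Y)\cap\Bigl(\tfrac{\partial f_1}{\partial y_s}=\cdots=\tfrac{\partial f_r}{\partial y_s}=0\Bigr)\subseteq\QSing(X),
\]
which is exactly the paper's claim (and follows even more directly: if $q\in\QSing(Y)$ and $b(q)=0$, then $J_X(q)=[A(q)\,|\,0]$ has rank $<r$). Intersecting $\QSing(Y)$ with these $r$ hypersurfaces drops dimension by at most $r$, so $\delta_Y-r\le\delta_X$. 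No Euler identity, no $\Phi$, no fiber estimate needed. You should replace the entire $\Phi$-paragraph with this two-line conclusion.
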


\begin{proof}
Let $f_1,\cdots,f_r$ be the defining equations of $X$. We claim that
\[\QSing(Y)\cap \left( \frac{\partial f_1}{\partial y_s}=\cdots=\frac{\partial f_r}{\partial y_s}=0 \right) \subseteq \QSing(X),\]
from which the lemma immediately follows. Denote the left hand side by $W$ and let $y\in W$. Then as $Y$ is not quasi-smooth at $y$, the rank of the Jacobian
\[\left( \frac{\partial f_i}{\partial x_j} \right)_{1\le i\le r, 1\le j\le m+s-1}\]
is less than $r$ (where $x_{m+q}=y_q$, $q=1,\cdots,s$). Since $\frac{\partial f_1}{\partial y_s}=\cdots=\frac{\partial f_r}{\partial y_s}=0$ at $y$, this rank is equal to the rank of the Jacobian for $X$ at $y$. Hence $X$ is not quasi-smooth at $y$ either and $y\in\QSing(X)$ as desired.
\end{proof}

\begin{lem} \label{lem:mult-bound-quasi-smooth}
Let $X\subseteq \bP$ be a quasi-smooth weighted complete intersection of codimension $r$ and let $Z\subseteq X$ be an effective cycle of pure codimension $k$ such that
\[Z \equiv c_1(\cO_X(1))^k\cap [X].\]
Then for every subvariety $S\subseteq X$ of dimension $\ge s+ (2^sk+2^s-1)r$, we have $\mult_S Z\le 1$.
\end{lem}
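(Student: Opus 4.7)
My plan is to argue by induction on $s$, the number of weights $>1$.

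\emph{Base case} $(s=0)$. In this case $\bP = \bP^{m-1}$ and $X$ is a smooth projective complete intersection of codimension $r$ in $\bP^{m-1}$. The required statement — that $\mult_S Z \le 1$ for every effective cycle $Z \equiv H^k \cdot [X]$ of pure codimension $k$ and every subvariety $S$ of dimension $\ge kr$ — is the classical multiplicity bound for smooth complete intersections due to Pukhlikov and Suzuki, cf.\ \cite{Puk-hypersurface,Suzuki-cpi}.

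\emph{Inductive step} $(s \ge 1)$. The idea is to reduce to a quasi-smooth weighted complete intersection in $\bP' := \bP(1^m, a_1, \ldots, a_{s-1})$ by cutting $X$ with a general divisor in $|\cO_X(a_s)|$ of the special form
\[ D_g := X \cap \bigl(\, y_s - g(x_1, \ldots, x_m, y_1, \ldots, y_{s-1}) = 0 \,\bigr), \]
where $g$ is a sufficiently general weighted homogeneous polynomial of weight $a_s$. Projecting out the $y_s$-coordinate identifies $D_g$ with a weighted complete intersection of codimension $r$ in $\bP'$, and a weighted Bertini-type argument should guarantee that $D_g$ is quasi-smooth for general $g$.

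For $g$ in sufficiently general position I would expect the following three properties to hold:
\begin{itemize}
\item[(i)] $S \not\subseteq D_g$, so that $S' := S \cap D_g$ has dimension $\dim S - 1$;
\item[(ii)] no component of $Z$ is contained in $D_g$, so that $Z' := Z \cdot D_g$ is well-defined as a proper intersection, is of pure codimension $k$ in $D_g$, and satisfies $Z' \equiv c_1(\cO_{D_g}(1))^k \cdot [D_g]$;
\item[(iii)] $\mult_{S'} Z' \ge \mult_S Z$, by transversality of $D_g$ to $Z$ at a generic point of $S'$.
\end{itemize}
Granting (i)--(iii), the inductive hypothesis applied to $(D_g, Z', S')$ inside $\bP'$ yields $\mult_{S'} Z' \le 1$ provided $\dim S' \ge (s-1) + (2^{s-1}k + 2^{s-1} - 1)r$. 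The required inequality follows from the elementary computation
\[ \dim S' = \dim S - 1 \ge s + (2^s k + 2^s - 1)r - 1 = (s-1) + (2^{s-1}k + 2^{s-1} - 1)r + 2^{s-1}(k+1)r, \]
so the induction closes and $\mult_S Z \le 1$.

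\emph{Main obstacle}. The crux is verifying (i)--(iii) and the quasi-smoothness of $D_g$ simultaneously in the weighted setting. The relevant linear system $\{\,y_s - g\,\}$ may have a base locus in the coordinate strata of $\bP$, so the classical Bertini theorem does not apply verbatim; one must use the quasi-smoothness of $X$ together with Lemma 3.1 (or a more careful bound on the non-quasi-smooth locus of sections) to show that, for generic $g$, the singularities of $D_g$ are confined to a locus of low dimension that does not interfere with $S$, $Z$, or the proper-intersection claim in (ii). The sizable slack $2^{s-1}(k+1)r$ in the dimension count presumably leaves room to absorb any such technical loss, and in particular to accommodate a sharper inductive reduction in which the codimension roughly doubles at each step (consistent with the $2^s$ factor in the stated bound).
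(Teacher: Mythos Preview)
Your inductive framework is right in spirit, but the ``weighted Bertini'' step you flag as the main obstacle is a genuine gap, and in fact the claim that $D_g$ is quasi-smooth for general $g$ can fail. On the affine cone the system $\{y_s - g\}$ has base locus $C_X \cap \{x_1 = \cdots = x_m = y_s = 0\}$, and at a base point $p$ the differential $dg|_p$ only picks up monomials in $y_1,\ldots,y_{s-1}$ of weighted degrees $a_s - 1$ or $a_s - a_j$; if none of these lie in the semigroup $\langle a_1,\ldots,a_{s-1}\rangle$ then $dg|_p = 0$ for every $g$, so $d(y_s - g)|_p = dy_s$ and smoothness of $C_{D_g}$ at $p$ reduces to transversality of $C_X$ with $(y_s = 0)$, which need not hold. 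Concretely, a quasi-smooth hypersurface of degree $8$ in $\bP(1^m,3,5)$ with nonzero $y_1 y_2$-coefficient gives such an example: the point $[0:\cdots:0:1]\in\bP(1^m,3)$ lies on every $D_g$ and is singular on each.

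The paper sidesteps this by cutting with the \emph{fixed} section $Y = X\cap(y_s=0)$, accepting that $Y$ need not be quasi-smooth, and invoking the previous lemma to get $\dim\QSing(Y)\le r-1$. One then intersects $Y$ with $r$ further general weighted hypersurfaces of large degree to produce a quasi-smooth $Y_1\subseteq\bP(1^m,a_1,\ldots,a_{s-1})$ of codimension $2r$; this doubling at each step is exactly what generates the factor $2^s$ in the bound, confirming your suspicion about the slack. Because $Y$ is fixed, one must also deal with the possibility $Z\subseteq Y$: the paper first reduces to irreducible $Z$ using $b_{2k}(X)=1$ \cite{Dimca-betti}, and in that case treats $Z$ as a codimension-$(k-1)$ cycle on $Y_1$ with class $a_s^{-1}\cdot c_1(\cO(1))^{k-1}\cap[Y_1]$. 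Your items (i)--(iii) are essentially correct once a genuinely base-point-free system is in hand, but that only appears after these extra cuts.
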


\begin{proof}
We prove by induction on $s$. When $s=0$ the statement follows from \cite[Proposition 5]{Puk-hypersurface} or \cite[Proposition 2.1]{Suzuki-cpi}. Assume that $s\ge 1$ and that the statement has been proven for smaller values of $s$. We may assume that $\dim X\ge s+ (2^sk+2^s-1)r\ge 2k+r+1$, otherwise there is no such $S$. Then by \cite[Proposition 6]{Dimca-betti}, $X$ has Betti number $b_{2k}(X)=1$, hence every irreducible component of $Z$ is numerically proportional to $Z$ and it suffices to show $\mult_S Z\le 1$ under the assumption that $Z$ itself is irreducible. Let $Y=X\cap (y_s=0)$, then by the previous lemma $\delta_Y \le r-1$. Let $d$ be a sufficiently large and divisible integer and let $Y_1$ be a complete intersection in $Y$ cut out by $r$ general weighted hypersurfaces of degree $d$, then $Y_1$ is a quasi-smooth weighted complete intersection of codimension $2r$ in $\bP(1^m,a_1,\cdots,a_{s-1})$. Let $T=S\cap Y_1$, then $\dim T\ge \dim S - (r+1)\ge s-1+(2^{s-1}k+2^{s-1}-1)\cdot 2r$. If $Z\not\subseteq Y$, then $W=(Z\cdot Y_1)$ is a well-defined cycle of pure codimension $k$ in $Y_1$ such that $W\equiv c_1(\cO(1))^k\cap [Y_1]$. By induction hypothesis we have $\mult_T W\le 1$, hence $\mult_S Z\le 1$ in this case. If otherwise $Z\subseteq Y$, then we may view $Z$ as a cycle of codimension $k-1$ in $Y$ and get a well-defined cycle $W=(Z\cdot Y_1)$ of pure codimension $k-1$ in $Y_1$. By \cite[Proposition 6]{Dimca-betti}, we have $b_{2k-2}(Y_1)=1$, hence there exists some $\lambda\in\bQ$ such that $W\equiv \lambda\cdot  c_1(\cO(1))^{k-1}\cap [Y_1]$. Comparing the degrees of both sides we see that $\lambda=a_s^{-1}\le 1$. Therefore, using our induction hypothesis again we obtain $\mult_T W\le 1$ and thus $\mult_S Z\le 1$ as desired. 
\end{proof}

\begin{cor} \label{cor:mult-bound-wt-cpi}
Let $X\subseteq \bP$ be a weighted complete intersection of codimension $r$ and let $Z\subseteq X$ be an effective cycle of pure codimension $k$ such that $Z \sim_\bQ c_1(\cO_X(1))^k\cap [X]$. Then for every subvariety $S\subseteq X$ of dimension $\ge s+\delta_X+1+ (2^sk+2^s-1)(r+\delta_X+1)$, we have $\mult_S Z\le 1$.
\end{cor}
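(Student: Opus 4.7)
The plan is to reduce to the quasi-smooth case treated in Lemma \ref{lem:mult-bound-quasi-smooth} by intersecting $X$ with $\delta_X+1$ general weighted hypersurfaces of sufficiently large degree, so that the resulting smaller complete intersection becomes disjoint from $\QSing(X)$. The dimension bound appearing in the statement is precisely what this reduction forces: one loses $\delta_X+1$ from $\dim S$, and the ambient codimension grows from $r$ to $r+\delta_X+1$.

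More concretely, let $d$ be a sufficiently large and divisible integer, choose general elements $H_1,\dots,H_{\delta_X+1}\in |\cO_\bP(d)|$, and set
\[ X'=X\cap H_1\cap\dots\cap H_{\delta_X+1}, \qquad S'=S\cap H_1\cap\dots\cap H_{\delta_X+1}, \qquad Z'=Z\cdot X'. \]
Since $\dim\QSing(X)=\delta_X$, general position gives $X'\cap\QSing(X)=\emptyset$, and an iterated Bertini argument on the quasi-smooth locus of $X$ shows that $X'$ is itself a quasi-smooth weighted complete intersection of codimension $r+\delta_X+1$ in $\bP$. Moreover, for general $H_i$ (with $d\gg 0$ so that $|\cO_\bP(d)|$ is base-point-free on the loci of interest), neither $S$ nor any component of $Z$ is contained in any $H_i$, so $\dim S'=\dim S-(\delta_X+1)\ge s+(2^sk+2^s-1)(r+\delta_X+1)$ and $Z'$ is a well-defined effective cycle of pure codimension $k$ in $X'$.

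The hypothesis $Z\sim_\bQ c_1(\cO_X(1))^k\cap [X]$ restricts along the regular embedding $X'\hookrightarrow X$ to $Z'\sim_\bQ c_1(\cO_{X'}(1))^k\cap [X']$, and in particular $Z'\equiv c_1(\cO_{X'}(1))^k\cap [X']$. Applying Lemma \ref{lem:mult-bound-quasi-smooth} to $X'$, $Z'$, $S'$ (with $r$ replaced by $r+\delta_X+1$) therefore yields $\mult_{S'}Z'\le 1$; since the $H_i$ are general, the Samuel multiplicity along a subvariety is preserved under such transverse sections, so $\mult_S Z=\mult_{S'}Z'\le 1$, as required. The main points needing brief justification are the quasi-smoothness of $X'$ (a weighted-projective Bertini statement) and the invariance of multiplicity under general hypersurface sections; neither requires ideas beyond the standard toolkit, but both must be spelled out to make the reduction rigorous.
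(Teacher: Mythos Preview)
Your proposal is correct and follows essentially the same route as the paper: intersect $X$ with $\delta_X+1$ general weighted hypersurfaces of large divisible degree to obtain a quasi-smooth weighted complete intersection $X_1$ of codimension $r+\delta_X+1$, then apply Lemma~\ref{lem:mult-bound-quasi-smooth} to $X_1$. The paper's proof is a one-line version of exactly this argument, omitting the routine Bertini and multiplicity-preservation details you spell out.
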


\begin{proof}
Let $d$ be a sufficiently large and divisible integer and let $X_1$ be a complete intersection in $X$ cut out by $\delta_X+1$ general weighted hypersurfaces of degree $d$, then $X_1$ is quasi-smooth and the result follows from Lemma \ref{lem:mult-bound-quasi-smooth} applied to $X_1$. 
\end{proof}

\begin{rem}
The main point of Lemma \ref{lem:mult-bound-quasi-smooth} and Corollary \ref{cor:mult-bound-wt-cpi} is that there exists an integer $N$ depending only on the discrete data $(k,r,s,\delta_X)$ (and most importantly, not on $\dim X$ or $a_1,\cdots,a_s$) such that $\mult_S Z\le 1$ whenever $\dim S\ge N$. Our choice of $N$ above is probably far from optimal (for example, for smooth cyclic covers over a hypersurface, one can just take $N=2k+1$ by a modification of the above argument), but it is sufficient for our need.
\end{rem}

\begin{proof}[Proof of Theorem \ref{thm:weighted cpi}]
The argument is almost identical to the proof of \cite[Theorem 1.2]{Z-cpi}, so we only give a sketch. Let $M\sim_\bQ -K_X$ be a movable boundary. By Lemma \ref{lem:mult-bound-quasi-smooth}, there exists an integer $N$ depending only on $r$ and $s$ such that $\mult_S M\le 1$ and $\mult_S (M^2)\le 1$ for every subvariety $S\subseteq X$ of dimension $\ge N$. In particular, by \cite[3.14.1]{Kol-sing-of-pair} and \cite[Theorem 0.1]{dFEM-mult-and-lct}, $(X,M)$ (resp. $(X,2M)$) has canonical (resp. lc) singularities outside a set of dimension at most $N-1$ in $X$. Let $x\in X$ and let $Y\subseteq X$ be a complete intersection cut out by $N$ general members of the linear system $|H|$ (it is base point free by assumption) containing $x$, then $(Y,2M|_Y)$ is lc outside $x$. Let $L=K_Y+2M\sim (N+1)H$, then $h^0(Y,L)$ is bounded by a (fixed) polynomial $P(m)$ of degree $N+1$ in $m$. Choose sufficiently large $M>0$ such that $p(m)\le\frac{c^c}{c!}$ (where $c=\dim Y=m+s-1-r-N$) for all $m\ge M$, then by \cite[Corollary 1.8]{Z-cpi}, $(Y,M|_Y)$ is lc at $x$, hence by inversion of adjunction, $(X,M)$ has canonical singularities and $X$ is birationally superrigid by \cite[Theorem 1.4.1]{Che-survey}. Similarly for all effective divisor $D\sim_\bQ -K_X$, we have $\lct(X;D)>\frac{1}{2}$, thus $\alpha(X)>\frac{1}{2}$ and by \cite[Theorem 1.2]{SZ-rigid-imply-stable}, $X$ is also K-stable.
\end{proof}

\section{Hypersurface with ordinary singularities} \label{sec:hypersurface}

In this section we prove Theorem \ref{thm:hypersurface}. The proof is a bit lengthy, so we divide it into several steps. Throughout the section, $X$ always denotes a hypersurface of degree $n+1$ and dimension $n\ge 250$ with only isolated ordinary singularities. We first treat the (super)rigidity of $X$.

\begin{lem} \label{lem:canonical on blowup}
Let $x\in X$ be a point of multiplicity $m_0\ge n-4$ and let $\pi:Y\rightarrow X$ be the blowup of $X$ at $x$ with exceptional divisor $E$. Let $M\sim_\bQ -K_X$ be a movable boundary and let $\tM$ be its strict transform on $Y$. Then $(Y,\tM)$ is canonical along $E$.
\end{lem}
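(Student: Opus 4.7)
The plan is to deduce canonicity of $(Y,\tM)$ along $E$ from log canonicity of the restricted pair $(E,\tM|_E)$ via Kawakita's inversion of adjunction, after first extracting multiplicity bounds from low-degree test curves on $X$ and on $E$. The underlying geometry: since $x$ is an ordinary singularity of multiplicity $m_0$, $Y$ is smooth near $E$, and $E\subseteq\bP^n$ is a smooth hypersurface of degree $m_0$ (the projectivized tangent cone of $X$ at $x$) with normal bundle $N_{E/Y}=\cO_E(-H_E)$, where $H_E$ is the hyperplane class on $E$. A standard intersection computation on the blowup of $\bP^{n+1}$ at $x$ gives $K_Y=\pi^*K_X+(n-m_0)E$. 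Writing $\mu=\mult_x M=\ord_E\pi^*M$, so that $\pi^*M=\tM+\mu E$, and using $(\pi^*M)|_E\sim 0$ (as $\pi$ contracts $E$), the restriction satisfies $\tM|_E\sim_\bQ\mu H_E$.

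The first substantive step is the bound $\mu\le 1$. Since $m_0\ge n-4\ge 246$, the scheme parameterizing lines $\ell\subseteq X$ through $x$ (cut out in $\bP^n$ by the forms $f_{m_0},\ldots,f_{n+1}$ of a local equation of $X$) has dimension $\ge m_0-2$, so a generic such line avoids the base locus of the movable system defining $M$. The strict transform $\tilde\ell$ on $Y$ meets $E$ transversally at a single point, and the projection formula yields $(\tM\cdot\tilde\ell)_Y=(M\cdot\ell)_X-\mu(E\cdot\tilde\ell)_Y=1-\mu\ge 0$, hence $\mu\le 1$. The analogous count on $E$ itself---directions in $\bP(T_pE)\cong\bP^{n-2}$ defining lines through $p\in E$ contained in $E$ are cut out by $m_0-1$ higher-order forms, giving a subscheme of expected dimension $n-m_0-1\ge 0$ whenever $m_0\le n-1$---produces through every $p\in E$ a line $\ell'\subseteq E$ not contained in $\Supp(\tM|_E)$. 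From $(\tM|_E\cdot\ell')_E=\mu$ and $\mult_p\ell'=1$ one infers $\mult_p(\tM|_E)\le\mu\le 1$, so $\lct_p(E,\tM|_E)\ge 1/\mult_p(\tM|_E)\ge 1$ by the standard bound on a smooth variety, and $(E,\tM|_E)$ is log canonical at $p$.

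Kawakita's inversion of adjunction now gives $(Y,\tM+E)$ log canonical in a neighborhood of $E$. For every exceptional divisor $F$ over $Y$ with center contained in $E$ we have $\ord_FE\ge 1$, so
$$a(F;Y,\tM)=a(F;Y,\tM+E)+\ord_FE\ge 0,$$
which is the canonical condition. Exceptional divisors whose center meets but is not contained in $E$ satisfy $\ord_FE=0$, so $a(F;Y,\tM)=a(F;X,M)$, and the canonical condition reduces to a statement at a smooth point of $X$, controlled by the multiplicity bounds of Lemma \ref{lem:mult-bound-quasi-smooth} and Corollary \ref{cor:mult-bound-wt-cpi} (exactly the input used in the proof of Theorem \ref{thm:weighted cpi}).

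The principal obstacle is the extremal case $m_0=n$, for which the line count on $E$ collapses to expected dimension $-1$: one must then fall back to conics through a general $p\in E$, which yield only the weaker estimate $\mult_p(\tM|_E)\le 2\mu$. Closing the gap will require either a Corti-type $4n^2$-inequality applied to $\tM|_E$ together with the higher-codimension multiplicity estimates of Lemma \ref{lem:mult-bound-quasi-smooth} on $E$, or a separate argument exploiting that $E$ is a smooth Fano hypersurface of index one; a related subtlety arises at special points of $E$ when $m_0=n-1$, where the family of lines through $p$ on $E$ degenerates to a finite scheme and one must verify that a generic line in this finite family avoids the base locus of $\tM|_E$.
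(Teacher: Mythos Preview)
Your strategy---bound $\mu=\ord_E M\le 1$ via lines on $X$ through $x$, then bound $\mult_p(\tM|_E)\le\mu$ via lines on $E$ through $p$, then apply inversion of adjunction---fails at the first step. When $m_0=n-1$ the projection from $x$ is generically $2$-to-$1$ and defines a birational (non-biregular) involution $\tau_x$; for the associated movable boundary $M=\frac{1}{a}\tau_x^*|H|\sim_\bQ -K_X$, every line $\ell\subset X$ through $x$ lies in the indeterminacy locus of $\tau_x$ (for $p\in\ell$ the line $\overline{xp}=\ell$ sits entirely in $X$, so there is no ``second'' intersection point to swap), hence in the base locus of $M$, and your test-curve argument produces no admissible $\ell$. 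This is not a fluke of one $M$: your dimension count gives only $\dim\{\ell\subset X:x\in\ell\}\ge m_0-2$, which for $m_0\le n-1$ does not exceed the trivial bound $n-3$ for lines through $x$ in a codimension-$2$ base locus, so the claim that a generic line avoids the base locus is unjustified. In fact $\mu>1$ does occur here---this is exactly why $E$ shows up as a maximal center in Theorem~\ref{thm:hypersurface}(3)---so the inequality you need is false, not merely unproved. The second step has the analogous defect: Pukhlikov's estimate on the hypersurface $E$ controls $\mult_p(\tM|_E)$ only off a finite set, and for $m_0\ge n-1$ (when the lines through $p$ on $E$ are finite or absent) you have no mechanism to handle the exceptional points. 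The ``obstacles'' you flag at the end are therefore not peripheral but are precisely the regime $m_0\ge n-4$ the lemma must cover.

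The paper's proof never bounds $\mu$. It works instead with the codimension-$2$ cycle $Z=\tM^2$: splitting $Z=Z_1+Z_2$ with $Z_1$ supported on $E$ and testing against the nef class $\pi^*H-E$ yields $b+c^2\le\frac{n+1}{m_0}$ (where $Z_1\sim_\bQ -bE^2$ and $c=\mu$), and Pukhlikov's bound on $E$ then gives $\mult_y Z\le\frac{2(n+1)}{m_0}<\frac{9}{4}$ away from a set $V$ of dimension $\le 2$. Corti's inequality makes $(Y,\frac{4}{3}\tM)$ klt off $V$, a short codimension argument upgrades this to $(Y,\tM)$ canonical off $V$, and the residual locus is dispatched by cutting with three general members of $|\pi^*H-E|$ and applying the $h^0$-based lct criterion of \cite{Z-cpi} together with Lemma~\ref{lem:nlc estimate}; this is where $n\ge 250$ enters.
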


\begin{proof}
We start with some multiplicity estimate. By assumption, $E\subseteq \bP^n$ is a smooth hypersurface of degree $m_0$. Let $Z$ be the codimension $2$ cycle $\tM^2$ in $Y$. Note that by \cite[Proposition 5]{Puk-hypersurface}, we have $\mult_y M^2\le 1$ outside a finite union of surfaces, hence $\mult_y Z\le 1$ away from $E$ and another set of dimension at most $2$. Decompose $Z$ into $Z=Z_1+Z_2$ such that the irreducible components of $Z_1$ (resp. $Z_2$) is contained (resp. not contained) in $E$. We may view $Z_1$ as a divisor in $E$ and since $E$ has Picard number one, there exists some $b>0$ such that $Z_1\sim_\bQ b\cdot c_1(\cO_E(1))\cap [E]\sim_\bQ -bE^2$. By \cite[Proposition 5]{Puk-hypersurface} (note that $E$ is a smooth hypersurface), we have $\mult_y Z_1\le b$ outside a finite number of points. Let $c=\ord_E M$, then we have $\tM\sim_\bQ \pi^*M-cE$, $Z\sim_\bQ \pi^*H^2+c^2E^2$ and $Z_2=Z-Z_1\sim_\bQ \pi^*H^2+(c^2+b)E^2$. Let $W=Z_2\cdot E$, then $W\sim_\bQ (c^2+b)E^3$ and as a codimension $2$ cycle in $E$ we have $W\sim_\bQ (c^2+b) c_1(\cO_E(1))^2\cap [E]$, hence by \cite[Proposition 5]{Puk-hypersurface} again we see that $\mult_y W\le c^2+b$ outside a finite union of curves in $E$. On the other hand, as $\pi^*H-E$ is a nef line bundle on $Y$, we have
\[b m_0=Z_1\cdot (\pi^*H-E)^{n-2}\le Z\cdot (\pi^*H-E)^{n-2}=\tM^2\cdot (\pi^*H-E)^{n-2}=n+1-c^2 m_0,\]
hence $b\le c^2+b\le \frac{n+1}{m_0}$ and as $m_0\ge n-4$ and $n\ge 250$ by assumption, we obtain $\mult_y Z\le (c^2+2b)\le \frac{2(n+1)}{m_0}<\frac{9}{4}$ outside a subset $V$ of dimension at most $2$ in $Y$ (at least in a neighbourhood of $E$; the same remark applies to the other claims about singularities in this proof). In particular, if $y\in Y\backslash V$ and $S$ is a general surface section in $Y$ containing $y$, then by \cite[Theorem 3.1]{Corti-4n^2-ineq} (or \cite[Theorem 0.1]{dFEM-mult-and-lct}), $(S,\frac{4}{3}\tM|_S)$ is klt at $y$ and thus by inversion of adjunction, $(Y,\frac{4}{3}\tM)$ is klt away from $V$.

We also need to show that $(Y,\tM)$ has canonical singularities away from $V$. Let $y\in E\backslash V$ and let $K\subseteq E$ be a center of maximal singularity containing $y$. If $K$ has codimension at least $3$ in $Y$ then by adjunction $(S,\tM|_S)$ is not lc at $y$ where $S$ is a general surface section in $Y$ containing $y$, but from the above discussion $(S,\frac{4}{3}\tM|_S)$ is klt at $y$, a contradiction. Thus $K$ has codimension $2$ in $Y$ and is a divisor in $E$ (the pair $(X,M)$ is already canonical outside a finite union of curves, see the proof of \cite[Theorem 1.2]{LZ-singular-cpi}) and we have $\mult_K \tM>1$. It follows that (in the above notations) $b>1$ and $c>1$, but $c^2+b\le \frac{n+1}{m_0}<2$, a contradiction. Thus $(Y,\tM)$ has canonical singularities away from $V$.

The rest of the proof is similar to those in \cite{Z-cpi}. Let $y\in E$, let $Y'$ be a complete intersection in $Y$ cut out by three general hypersurfaces in $|\pi^*H-E|$ containing $y$ and let $M'=\tM|_{Y'}$. Then $(Y',M')$ is klt away from $y$ and it is not hard to verify that for $L=5\pi^*H+(n-3-m_0)E$, $L-(K_{Y'}+\frac{4}{3}M')$ is nef and big. By \cite[Theorem 3.3]{Z-cpi} and Lemma \ref{lem:nlc estimate} (applied to $\lambda=3$), we see that $(Y',M')$ is lc as long as
\begin{equation} \label{ineq:h^0(L)}
    h^0(Y',L)\le 2^{\frac{n-3}{3}-1}.
\end{equation}
But since $\pi(Y')$ is a complete intersection in $\bP^{n-2}$, we have
\[h^0(Y',L)\le h^0(\bP^{n-2},\cO_{\bP^{n-2}}(5))\le \binom{n+3}{5}\]
and then it is not hard to see that \eqref{ineq:h^0(L)} holds when $n\ge 250$. Therefore $(Y',M')$ is lc and since $\dim V\le 2$, $(Y,\tM)$ is canonical by \cite[Lemma 3.10]{Z-cpi}.
\end{proof}

The following lemma is used in the above proof.

\begin{lem} \label{lem:nlc estimate}
Let $X$ be a smooth variety of dimension $n$ and $x\in X$. Let $\lambda>0$, then $\lnlc(x,X;\lambda) > 2^{\frac{n}{\lambda}-1}$.
\end{lem}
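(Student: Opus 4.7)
The plan is to adapt the monomial-degeneration strategy used in the proof of Lemma \ref{lem:refined dFEM equality}, reducing the problem to a combinatorial lattice-point count on Newton polytopes. Working locally at $x$, I identify $(X, x)$ with $(\bA^n, \origin)$, and expect $\lnlc(x, X; \lambda)$ to be controlled (up to arbitrarily small error) by the colength $\ell(\cO/\fa)$ of an $\fm_x$-primary ideal $\fa$ whose log canonical threshold encodes the non-lc constraint determined by $\lambda$. By choosing a multiplicative monomial order and replacing $\fa$ with its initial ideal $\rin(\fa)$, both $\ell(\cO/\fa)$ and, by lower semi-continuity of the log canonical threshold, the non-log-canonicity constraint are preserved, so I may assume $\fa$ is monomial with Newton polytope $P \subseteq \bR^n_{\ge 0}$.

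By Howald's theorem the constraint becomes $(\lambda, \ldots, \lambda) \notin P^{\circ}$. I then pick a supporting hyperplane $F = \{\sum_i a_i u_i = 1\}$ of $P$ with $a_i \ge 0$ and $\lambda \sum_i a_i \le 1$. The simplex $S_F = \{u \in \bR^n_{\ge 0} : \sum_i a_i u_i \le 1\}$ lies in $\bR^n_{\ge 0} \setminus P^{\circ}$, so every lattice point in $S_F^{\circ}$ represents a monomial outside $\fa$ and therefore contributes to $\ell(\cO/\fa)$. The remaining task is thus the combinatorial one: bound $|\bZ^n_{\ge 0} \cap S_F^{\circ}|$ from below by $2^{n/\lambda - 1}$ using only the constraint $\sum_i a_i \le 1/\lambda$ and $a_i \ge 0$.

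After reordering so that $a_1 \le \cdots \le a_n$, the $k := \lceil n/\lambda \rceil$ smallest weights have sum at most $\tfrac{k}{n} \sum_i a_i \le \tfrac{k}{n\lambda}$, and counting $\{0,1\}$-valued vectors supported on the first $k$ coordinates already produces roughly $2^{n/\lambda}$ lattice points in $S_F^{\circ}$; this easily yields the desired bound in the regime $\lambda \ge 1$ that is relevant for all applications in this paper. The main obstacle is a uniform treatment of small $\lambda$, where the $\{0,1\}$-count alone becomes too weak: there one must enlarge the counted box by allowing $u_i \in \{0, 1, \ldots, \lfloor 1/(n a_i) \rfloor\}$ in the coordinates with small $a_i$, so that the sub-box $\prod_i \{0, 1, \ldots, \lfloor 1/(n a_i) \rfloor\}$ sits inside $S_F$; an AM-GM argument applied to $\sum a_i \le 1/\lambda$ then shows the resulting product of box sizes exceeds $2^{n/\lambda - 1}$ in all regimes, which completes the proof.
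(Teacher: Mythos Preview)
The paper's own proof is a one-line citation to \cite[Lemma 3.4]{Z-cpi} and the proof of \cite[Lemma A.5]{Z-cpi}, so what you are attempting is to reconstruct those results. Your high-level strategy --- degenerate to a monomial ideal, translate the non-lc condition via Howald into a constraint on the Newton polytope, and bound the colength below by a lattice-point count under a supporting hyperplane --- is indeed the mechanism behind those cited lemmas, so in spirit your approach matches the paper's.

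There is, however, a real gap in the execution for $\lambda<1$, and the paper does use that range (Lemma~\ref{lem:D_0/n lc} invokes the present lemma with $\lambda=\tfrac{1}{n-1}$). With your stated constraint $(\lambda,\ldots,\lambda)\notin P^{\circ}$, the supporting hyperplane satisfies $\sum_i a_i\le 1/\lambda$, and the equal-weight choice $a_i=\tfrac{1}{n\lambda}$ is admissible. For that choice $S_F^{\circ}=\{u\in\bZ_{\ge0}^n:\sum_i u_i<n\lambda\}$ contains only $\binom{n+\lceil n\lambda\rceil-1}{n}$ lattice points; when $\lambda=\tfrac{1}{n-1}$ this is $n+1$, nowhere near the required $2^{n(n-1)-1}$. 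So under your constraint the asserted inequality is simply false. Your proposed AM--GM box argument does not rescue this: in the same example every $a_i=\tfrac{1}{n\lambda}>\tfrac{1}{n}$, hence $\lfloor 1/(na_i)\rfloor=0$ and each factor of the product collapses to $1$. (Even for moderate $\lambda<1$, say $\lambda=\tfrac12$, the equal-weight simplex has roughly $(3\sqrt{3}/2)^n\approx 2.6^n$ lattice points, while the target is $2^{2n-1}\approx 4^n$.)

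The upshot is that you have not correctly identified what condition on the ideal the quantity $\lnlc(x,X;\lambda)$ actually encodes; the translation ``$(\lambda,\ldots,\lambda)\notin P^{\circ}$'' cannot be the right one. Before the combinatorics can be made to work you need the precise definition from \cite{Z-cpi}; the lattice-point estimate there is set up differently and does not reduce to the hyperplane inequality $\sum_i a_i\le 1/\lambda$.
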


\begin{proof}
This follows directly from \cite[Lemma 3.4]{Z-cpi} and the proof of \cite[Lemma A.5]{Z-cpi}.
\end{proof}

We are ready to prove the birational (super)rigidity part of Theorem \ref{thm:hypersurface}.

\begin{proof}[Proof of Theorem \ref{thm:hypersurface} (2)(3)]
Let $M\sim_\bQ -K_X$ be a movable boundary. Suppose that $(X,M)$ is not canonical at some point $x\in X$ with multiplicity $m_0$. By (the proof of) \cite[Theorem 1.2]{LZ-singular-cpi}, $(X,M)$ has canonical singularities outside points of multiplicity at most $n-5$. In particular, $(X,M)$ is canonical in a punctured neighbourhood of $x$ and we have $m_0\ge n-4$. Let $\pi:Y\rightarrow X$ be the blowup of $x$, let $\tM$ be the strict transform of $M$ and let $E$ be the exceptional divisor. By Lemma \ref{lem:canonical on blowup}, $(Y,\tM)$ has canonical singularities, thus as $(X,M)$ is not canonical, $E$ must be a center of maximal singularity and we obtain $a(E;X,M)=n-m_0-\ord_E M<0$. As $M$ is movable and $\pi^*H-E$ is nef (where $H$ is the hyperplane class on $X$), we have $(\tM^2\cdot (\pi^*H-E)^{n-2})\ge 0$ and hence $(n-m_0)^2 m_0 < (\ord_E M)^2 m_0 \le \deg X=n+1$, which can only be true when $m_0=n-1$. It follows that the only possible maximal singularity of $X$ is the ordinary blowup of a point of multiplicity $n-1$. By standard argument (see e.g. \cite[\S 3]{CPR}), this proves parts (2) and (3) of Theorem \ref{thm:hypersurface}.
\end{proof}

Next we show that $X$ is K-stable when all points have multiplicity at most $n-1$. By \cite[Theorem 1.2]{SZ-rigid-imply-stable}, it suffices to show that $\alpha(X)>\frac{1}{2}$ and that in case $X$ has multiplicity $n-1$ at some point $x$, $(X,M)$ is lc at $x$ for every movable boundary $M\sim_\bQ -K_X$ (using a modification of the above argument).

\begin{lem} \label{lem:K-stable mult<=n-1}
Let $x\in X$ be a point of multiplicity $m_0\le n-1$, then $(X,M)$ is lc at $x$ for every movable boundary $M\sim_\bQ -K_X$ and $(X,\frac{1}{2}D)$ is klt at $x$ for every effective divisor $D\sim_\bQ -K_X$.
\end{lem}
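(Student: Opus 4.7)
Both assertions are handled via the blowup $\pi\colon Y \to X$ at $x$, whose exceptional divisor $E$ is a smooth hypersurface of degree $m_0$ in $\bP^n$ by the ordinary singularity assumption. The key inputs are: (i) intersection bounds coming from nefness of $\pi^*H - E$ on $Y$; (ii) an inversion of adjunction reduction to a klt statement on the smooth $(n-1)$-fold $E$; and (iii) pointwise multiplicity bounds on $E$ via Grothendieck-Lefschetz ($\Pic(E) = \bZ \cdot H_E$) together with surjectivity of the restriction $H^0(\bP^n, \cO(k)) \twoheadrightarrow H^0(E, \cO_E(k))$, so that every $L \in |k H_E|$ is a hypersurface section of degree $k$ in the ambient $\bP^n$ and hence has $\mult_y L \le k$ at every point $y$.

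For assertion (1), the case $m_0 \le n-2$ is absorbed into the proof of Theorem~\ref{thm:hypersurface}(2)(3), where the intersection inequality $(n-m_0)^2 m_0 < n+1$ established there forces any non-canonical singularity of $(X, M)$ to have $m_0 = n-1$; thus for $m_0 \le n-2$ the pair is already canonical, hence lc. For $m_0 = n-1$, Lemma~\ref{lem:canonical on blowup} gives $(Y, \tM)$ canonical along $E$, while $\tM^2 \cdot (\pi^*H - E)^{n-2} = (n+1) - c^2 m_0 \ge 0$ (with $c = \ord_E M$) forces $c \le \sqrt{(n+1)/(n-1)} < 2 = n-m_0+1$ for $n \ge 3$; in particular $a(E; X, M) > -1$, so $E$ is not a non-lc place. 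To rule out non-lc valuations $F$ with $c_Y(F) \subsetneq E$, I examine the log pullback $(Y, \tM + (c-1)E)$, whose $E$-coefficient is strictly less than $1$, and apply inversion of adjunction: it suffices to check klt of $(E, \tM|_E)$. Since $\tM|_E$ has class $c H_E$ with $c < \sqrt 2$, the pointwise bound gives $\mult_y \tM|_E \le c$, so \cite[Theorem 0.1]{dFEM-mult-and-lct} yields $\lct_y(E; \tM|_E) \ge (n-1)/\sqrt 2 > 1$, proving the required klt property on $E$ and hence lc of $(X, M)$ at $x$.

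For assertion (2), the same setup gives $\tD \cdot (\pi^*H - E)^{n-1} = (n+1) - c m_0 \ge 0$, so $c = \ord_E D \le (n+1)/m_0$. A direct computation shows $(n+1)/m_0 < 2(n-m_0+1)$ for every $1 \le m_0 \le n-1$ (and $n \ge 4$), so $a(E; X, \tfrac12 D) > -1$ and $E$ is not a non-klt place. The pointwise bound then gives $\mult_y \tD|_E \le c \le (n+1)/m_0$, and dFEM on $E$ yields $\lct_y(E; \tD|_E) \ge m_0(n-1)/(n+1) > 1/2$ throughout the range, making $(E, \tfrac12 \tD|_E)$ klt. Inversion of adjunction then completes the argument as in (1), proving klt of $(X, \tfrac12 D)$ at $x$.

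The main anticipated obstacle lies in (1): the naive degree bound $\deg \tM|_E \le \sqrt{n^2-1}$ would give only $\mult_y \tM|_E \le \sqrt{n^2-1}$, falling just short of the threshold $n-1$ required for dFEM to yield klt on $E$ directly. The refinement using hypersurface-section realization (Grothendieck-Lefschetz plus restriction surjectivity) produces the much stronger pointwise bound $\mult_y \le c < \sqrt 2$ from the integer degree, which is the crucial technical step; for (2), the factor $\tfrac12$ provides enough slack that only the coarser per-degree comparison is needed.
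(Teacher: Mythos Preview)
Your argument contains a genuine error in step (iii), which both parts of the proof rely on. The claim that every $L\in|kH_E|$ satisfies $\mult_y L\le k$ at every point is false: take $E$ a smooth conic in $\bP^2$ and $L$ the tangent line at $y$; then $L\in|H_E|$ but $\mult_y L=2$. In general, restricting a degree-$k$ hypersurface to a smooth hypersurface $E\subset\bP^n$ can produce arbitrarily high multiplicity at a point of tangency (bounded only by $k\cdot\deg E$). What \cite[Proposition 5]{Puk-hypersurface} actually gives is the bound $\mult_S L\le k$ along subvarieties $S$ of \emph{positive} dimension, not at individual points; this is exactly why the paper has to work harder. A second, independent error: for a \emph{divisor} $D$ the inequality from \cite{dFEM-mult-and-lct} is $\lct_y\ge 1/\mult_y D$, not $(\dim E)/\mult_y D$; the $n^n$ statement is for zero-dimensional schemes. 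So even granting your (false) bound $\mult_y\tM|_E\le c<\sqrt{2}$, you would only get $\lct\ge 1/\sqrt{2}<1$, which is not enough. The same two issues break your treatment of part (2): your claimed bound $\lct_y(E;\tD|_E)\ge m_0(n-1)/(n+1)$ has an unjustified factor of $n-1$, and the multiplicity bound it rests on is incorrect.

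The paper avoids inversion of adjunction to $E$ entirely. For the $M$-statement with $m_0=n-1$ it reuses the decomposition from Lemma~\ref{lem:canonical on blowup} to get $\mult_y(\tM^2)<\tfrac{2(n+1)}{n-1}$ on $Y$ outside a set $V$ of dimension $\le 2$; Corti's surface inequality then shows $(Y,\tfrac{4}{3}\Gamma)$ is lc away from $V$ for $\Gamma=(c-1)E+\tM$, and the remaining bad locus is killed by the global Nadel-vanishing argument \cite[Theorem~3.3]{Z-cpi} together with the colength estimate of Lemma~\ref{lem:nlc estimate}. For the $D$-statement the paper splits into $m_0\le n-5$ (handled by \cite{LZ-singular-cpi}) and $m_0\ge n-4$, bounds $\mult_y\tD\le (n+1)/m_0$ on $Y$ outside a set of dimension $\le 1$, and again applies \cite[Theorem~3.3]{Z-cpi}. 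The passage from ``lc outside a low-dimensional set'' to ``lc everywhere'' is the substantive step you are missing, and it cannot be replaced by a pointwise multiplicity bound on $E$.
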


\begin{proof}
As before let $\pi:Y\rightarrow X$ be the blowup of of $x$ with exceptional divisor $E$ and let $\tM$ be the strict transform of $M$. From the above proof of Lemma \ref{lem:canonical on blowup} and Theorem \ref{thm:hypersurface}(2), we see that $(X,M)$ is indeed canonical at $x$ if $m_0\le n-2$. Hence we may assume that $m_0=n-1$. We have $K_Y+\tM+(c-1)E=\pi^*(K_X+M)$ where $c=\ord_E M$ and as before since $\tM^2\cdot (\pi^*H-E)^{n-2}\ge 0$, we obtain $c^2\le \frac{n+1}{m_0}=1+\frac{2}{n-1}$ (hence $c-1<\frac{1}{n-1}$) and $\mult_y \tM^2<\frac{2(n+1)}{n-1}$ outside a set $V$ of dimension at most $2$. We may assume that $c>1$, otherwise $(X,M)$ is already canonical by Lemma \ref{lem:canonical on blowup}. By \cite[Theorem 3.1]{Corti-4n^2-ineq} applied to a general surface section of $(Y,(c-1)E)$ containing some $y\in Y\backslash V$ as before (noting that $\frac{16}{9}\mult_y \tM^2\le 4(1-\frac{4}{3}(1-c))$), we find that $(Y,\frac{4}{3}\Gamma)$ is lc away from $V$ (where $\Gamma=(c-1)E+\tM$) and the same argument as before (i.e. using \cite[Theorem 3.3]{Z-cpi} and Lemma \ref{lem:nlc estimate}) implies that $\lct(Y;\frac{4}{3}\Gamma)\ge\frac{3}{4}$ along $E$. In particular, $(Y,\tM+(c-1)E)$ is lc along $E$ and $(X,M)$ is lc at $x$ as desired.

Similarly, let $\tD$ be the strict transform of $D$ on $Y$, then $\tD\sim_\bQ \pi^*H-aE$ for some $a\le \frac{n+1}{m_0}$ (as $\tD\cdot(\pi^*H-E)^{n-1}\ge 0$) and $\mult_y \tD\le \frac{n+1}{m_0}$ outside a set $V_1$ of dimension at most $1$ (by bounding the degree of $\tD|_E$ and using \cite[Proposition 5]{Puk-hypersurface}). If $m_0\le n-5$ then by the proof of \cite[Theorem 1.2]{LZ-singular-cpi}, $(X,\frac{1}{2}D)$ is klt at $x$. If $m_0\ge n-4$, then by the above multiplicity estimate $(Y,\lambda\tD)$ is klt away from $V_1$ for some $\lambda>\frac{3}{4}$ and a similar argument as before involving \cite[Theorem 3.3]{Z-cpi} and Lemma \ref{lem:nlc estimate} (this time applied to $\lambda=2$) implies that $\lct(Y;\lambda\tD) \ge \frac{2}{3}$ and thus $(Y,\frac{1}{2}\tD)$ is klt along $E$. Since $a(E;X,\frac{1}{2}D) = n-m_0-\frac{1}{2}a\ge 1-\frac{1}{2}a>0$, $(X,\frac{1}{2}D)$ is also klt at $x$.
\end{proof}

Finally we treat the case when some points of $X$ has multiplicity $n$. For this we use the criterion \cite[Theorem 1.5]{Z-cpi} and need to analyze the singularities of a few more auxiliary pairs.

\begin{lem} \label{lem:D/n+(1-1/n)M lc}
Let $D\sim_\bQ -K_X$ be an effective divisor and $M\sim_\bQ -K_X$ a movable boundary. Then $(X,\frac{1}{n}D+\frac{n-1}{n}M)$ is lc over the smooth locus of $X$.
\end{lem}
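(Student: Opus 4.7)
The plan is to mirror the strategy used in the proofs of Lemmas~\ref{lem:canonical on blowup} and~\ref{lem:K-stable mult<=n-1}: first establish that $(X,\Gamma)$, with $\Gamma:=\tfrac{1}{n}D+\tfrac{n-1}{n}M \sim_{\bQ} H$, is klt on the complement of a subset $V\subseteq X$ of dimension at most $2$; then upgrade to log canonicity at an arbitrary smooth point $x\in X$ by cutting $X$ with three general members of $|H|$ through $x$ and applying the cohomological criterion \cite[Theorem 3.3]{Z-cpi} together with Lemma~\ref{lem:nlc estimate}.

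First I would apply \cite[Proposition 5]{Puk-hypersurface} to the movable boundary $M$: outside a subset of $X$ of dimension at most $2$, we have $\mult_y M\le 1$ and $\mult_y M^2\le 1$. For the effective divisor $D\sim_{\bQ}H$, the locus of $y$ with $\mult_y D>1$ is contained in the tangency locus of $X$ with the hyperplane sections appearing in a fixed decomposition of $D$, which is a proper closed subset of $X$; enlarging $V$ to include this locus preserves $\dim V\le 2$. Then for smooth $y\in X\setminus V$, cutting by a general surface section $S\ni y$ and invoking a Corti-type surface inequality (e.g.\ \cite[Theorem 3.1]{Corti-4n^2-ineq}) applied with the coefficients $\tfrac{1}{n}$ and $\tfrac{n-1}{n}$ shows that $(S,\Gamma|_S)$ is klt at $y$; inversion of adjunction then gives klt of $(X,\Gamma)$ at $y$.

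Next, let $x\in X$ be a smooth point. Cut $X$ by $3$ general members of $|H|$ containing $x$ (which exist since $|H|$ is base-point-free on the smooth hypersurface $X$) to obtain a codimension-$3$ complete intersection $Y=X\cap\bP^{n-2}\subseteq X$ through $x$; by Bertini, $Y$ is smooth at $x$, and by adjunction $K_Y\sim 2H|_Y$. By the generality of the cut and $\dim V\le 2<\dim Y=n-3$, the punctured intersection $(Y\setminus\{x\})\cap V$ is empty, so $(Y,\Gamma|_Y)$ is klt in a punctured neighborhood of $x$. Setting $c=4/3$ and $L=4H|_Y$, the difference $L-(K_Y+c\Gamma|_Y)\sim_{\bQ}\tfrac{2}{3}H|_Y$ is ample, while $h^0(Y,L)\le\binom{n+2}{4}$ is smaller than $2^{(n-3)/3-1}$ whenever $n\ge 250$. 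By \cite[Theorem 3.3]{Z-cpi} together with Lemma~\ref{lem:nlc estimate} (applied with $\lambda=3$), the pair $(Y,\Gamma|_Y)$ is lc at $x$; inversion of adjunction for the three cutting hyperplane sections then gives $(X,\Gamma)$ lc at $x$, as required.

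The main obstacle I anticipate is the first step: unlike the movable $M$, the effective divisor $D$ does not admit a uniform Pukhlikov-type multiplicity bound, so one must exploit its linear equivalence to $H$ to verify that the locus where $\mult_y D>1$ has dimension at most $2$. The small coefficient $\tfrac{1}{n}$ on $D$, in contrast with the near-critical $\tfrac{n-1}{n}$ on $M$, plays a delicate role in ensuring that the Corti-type surface inequality yields klt with the correct margin.
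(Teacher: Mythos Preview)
Your overall two-step strategy matches the paper's, but there are two genuine gaps in the execution.

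First, your worry in the final paragraph is misplaced: Pukhlikov's multiplicity bound \cite[Proposition 5]{Puk-hypersurface} applies to \emph{any} effective cycle in the given numerical class, not just to movable ones. In particular it yields $\mult_y D\le 1$ outside a set of small dimension directly, and this is exactly what the paper invokes. Your proposed substitute via ``tangency loci of hyperplane sections'' is both unnecessary and incorrect: the irreducible components of $D\sim_{\bQ}H$ need not be hyperplane sections (they may lie in $|mH|$ for $m>1$), and even if they were, there is no reason the singular locus of $D$ should have dimension at most~$2$.

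Second, and more importantly, Step~1 and Step~2 do not fit together. Step~1 only establishes that $(X,\Gamma)$ is klt outside $V$. But in Step~2 you set $c=\tfrac{4}{3}$ and invoke \cite[Theorem 3.3]{Z-cpi} with $\lambda=3$; for that application (compare the proofs of Lemmas~\ref{lem:canonical on blowup} and~\ref{lem:K-stable mult<=n-1}) the required input is that $(Y,c\Gamma|_Y)$ be lc in a punctured neighbourhood of $x$, which would follow from $(X,c\Gamma)$ being lc outside $V$ --- a strictly stronger statement than what your Step~1 gives. The paper handles this by fixing $c=\tfrac{3}{2}$ from the outset: using Lemma~\ref{lem:surface D+M} (a consequence of \cite[Theorem 2.2]{dFEM-mult-and-lct}, not Corti's $4n^2$-inequality) one checks
\[
\frac{4c}{n}\mult_x(D|_S)+c^2\Bigl(\frac{n-1}{n}\Bigr)^2\mult_x(M|_S^2)\le \frac{4c}{n}+c^2\Bigl(1-\frac{1}{n}\Bigr)^2<4,
\]
so $(S,c\Gamma|_S)$ is lc, whence $(X,c\Gamma)$ is lc away from $Z$. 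The paper's Step~2 then cuts by only two general hyperplanes, takes $L=3H|_Y$, and uses $\lambda=2$ (so that the conclusion $\lct(Y;c\Gamma|_Y)\ge\tfrac{2}{3}=c^{-1}$ gives exactly $(Y,\Gamma|_Y)$ lc). Your codimension-$3$ cut with $\lambda=3$ would equally work once Step~1 is strengthened to produce $(X,\tfrac{4}{3}\Gamma)$ lc outside $V$, but as written the argument has a gap.
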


\begin{proof}
Let $\Gamma=\frac{1}{n}D+\frac{n-1}{n}M$ and let $c=\frac{3}{2}$. We first show that $(X,c\Gamma)$ is lc outside a set of dimension at most $2$. By \cite[Proposition 5]{Puk-hypersurface}, we have $\mult_x D\le 1$ and $\mult_x (M^2)\le 1$ away from a set $Z$ of dimension at most $2$. Let $S\subseteq X$ be a general surface section containing a point $x\in X\backslash Z$, then it's not hard to verify that
\[\frac{4c}{n}\mult_x (D|_S)+c^2\left( \frac{n-1}{n} \right)^2\mult_x (M|_S^2) \le \frac{4c}{n}+c^2\left( 1-\frac{1}{n} \right)^2 < 4,\]
hence by the following Lemma \ref{lem:surface D+M}, $(S,c\Gamma|_S)$ is lc at $x$ and therefore by inversion of adjunction we see that $(X,c\Gamma)$ is lc away from $Z$. Now let $y\in X$ be any smooth point and let $Y\subseteq X$ be a general linear section of codimension $2$ containing $y$, then $(Y,c\Gamma|_Y)$ is lc in a punctured neighbourhood of $y$ and $L-(K_Y+c\Gamma|_Y)$ is ample where $L=3H|_Y$. By \cite[Theorem 3.3]{Z-cpi} and Lemma \ref{lem:nlc estimate} (applied to $\lambda=2$), $\lct(Y,c\Gamma|_Y)\ge \frac{2}{3}=c^{-1}$ (in a neighbourhood of $y$) as long as $h^0(Y,L)=\binom{n+2}{3}\le 2^{\frac{n-2}{2}-1}$, which holds when $n\ge 250$. Therefore, $(Y,\Gamma|_Y)$ is lc at $y$ and by inversion of adjunction $(X,\Gamma)$ is also lc at $y$.
\end{proof}

\begin{lem} \label{lem:surface D+M}
Let $S$ be a smooth surface and let $x\in S$. Let $D$ $($resp. $M)$ be an effective divisor $($resp. a movable boundary$)$ on $S$. Assume that $4\mult_x D + \mult_x (M^2)\le 4$. Then $(S,D+M)$ is lc at $x$.
\end{lem}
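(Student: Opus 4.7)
The claim is (the contrapositive of) the two-dimensional $4n^2$-inequality for a boundary with both effective and movable parts. Assuming $(S, D + M)$ is not log canonical at $x$, the task is to derive $4 \mult_x D + \mult_x(M^2) > 4$.

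The most efficient route is to cite \cite[Theorem 3.1]{Corti-4n^2-ineq}, which handles exactly log pairs of the form $D + M$ on a smooth surface germ with $D$ effective and $M$ movable, and yields the desired numerical inequality whenever the pair fails to be lc. Equivalently, one can replace $M$ by $\frac{1}{2}(M_1 + M_2)$ for two general members $M_1, M_2 \in |M|$, observe $\mult_x(M_1 \cdot M_2) = \mult_x(M^2)$, and then apply \cite[Theorem 0.1]{dFEM-mult-and-lct} to the ideal $(f_{M_1}, f_{M_2}) + \cI_D$ together with the standard bound for log canonical thresholds of zero-dimensional subschemes on a smooth surface.

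If a self-contained derivation were required, the standard blowup argument runs as follows. Set $a = \mult_x D$ and $b = \mult_x M$; the hypothesis combined with the movability inequality $\mult_x(M^2) \ge b^2$ gives $4a + b^2 \le 4$, which via elementary calculus (the maximum of $t + 2\sqrt{1-t}$ on $[0,1]$ equals $2$) forces $a + b \le 2$. Blowing up $x$ produces $\pi : \tilde S \to S$ with exceptional divisor $E$ and
\[
\pi^*(K_S + D + M) = K_{\tilde S} + \tilde D + \tilde M + (a + b - 1) E,
\]
so the coefficient on $E$ is at most $1$. The non-lc locus must then be isolated on $E$, and one iterates using the identities $\tilde D \cdot E = a$ and $\mult_x(M^2) = b^2 + \sum_{y \in E} \mult_y(\tilde M^2)$ to transport the numerical data through each successive blowup.

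The main obstacle in the direct approach is that the numerical hypothesis does not descend cleanly to any single infinitely near point: the naive local estimate at the worst $y \in E$ reduces to $4a + 4b - b^2 \le 4$, which can fail for positive $a$ and $b$. Propagating the induction correctly requires a simultaneous accounting across all points of $E$ (weighting $\tilde D \cdot E$ and the excess intersection $\mult_x(M^2) - b^2$ together), which is precisely the bookkeeping that Corti's theorem encapsulates. For our purposes citing \cite[Theorem 3.1]{Corti-4n^2-ineq} is therefore by far the cleanest route.
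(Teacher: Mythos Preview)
Your proposal is correct: the lemma is indeed the contrapositive of the surface $4n^2$-type inequality, and citing \cite[Theorem 3.1]{Corti-4n^2-ineq} is a valid one-line proof.

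The paper's proof uses the same underlying input but packages it slightly differently, and the comparison is worth noting. Rather than invoking Corti's theorem in contrapositive form, the paper clears denominators: choose $m$ so that $mD$ and $mM$ are integral, set $\mu^{-1}=\lct_x(S,m(D+M))$, and apply \cite[Theorem~2.2]{dFEM-mult-and-lct} (the version with both a divisor and an ideal part) to obtain
\[
4\mu\,\mult_x(mD)+\mult_x(m^2M^2)\ge 4\mu^2.
\]
Dividing through by $\mu^2$ and comparing with the hypothesis $4\mult_x D+\mult_x(M^2)\le 4$ forces $\mu\le m$, i.e.\ $(S,D+M)$ is lc at $x$. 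This homogeneity trick avoids passing to general members of $|M|$ and sidesteps any bookkeeping with infinitely near points; it is a three-line argument. Your route via Corti is equally short once one accepts that \cite[Theorem~3.1]{Corti-4n^2-ineq} is stated in exactly the right generality. Your second alternative (forming the ideal $(f_{M_1},f_{M_2})+\mathcal I_D$ and applying \cite[Theorem~0.1]{dFEM-mult-and-lct}) is less clean, since that theorem concerns a single finite-colength ideal and does not directly separate the divisorial and movable contributions; the paper's use of Theorem~2.2 of the same reference is the right substitute. The blowup discussion in your third paragraph is accurate but, as you yourself conclude, unnecessary.
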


\begin{proof}
Let $m$ be a sufficiently divisible integer such that $mD$ and $mM$ are both integral. Let $\mu$ be such that $\mu^{-1}=\lct_x(S,m(D+M))$. By \cite[Theorem 2.2]{dFEM-mult-and-lct}, we have $4\mu\cdot \mult_x(mD)+\mult_x(m^2M^2)\ge 4\mu^2$. Equivalently,
\[
\frac{4m}{\mu}\mult_x(D)+\frac{m^2}{\mu^2}\mult_x(M^2)\ge 4.
\]
By assumption, this implies $\mu\le m$. In other words, $(S,D+M)$ is lc at $x$.
\end{proof}

\begin{lem} \label{lem:D_0/n lc}
Let $V\subseteq \bP^n$ be a smooth Fano hypersurface and let $D\sim_\bQ \ell H$ be an effective divisor on $V$ for some $\ell\le n(n-2)$ (where $H$ is the hyperplane class). Assume that $(V,D)$ is lc away from a finite number of points, then $(X,\frac{1}{n}D)$ is lc. 
\end{lem}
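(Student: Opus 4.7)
The conclusion should read $(V, \tfrac{1}{n}D)$ (the appearance of $X$ in the statement is a minor typographical slip). Since $(V,D)$ is already lc outside a finite set, and the coefficients of $\tfrac{1}{n}D$ are smaller than those of $D$ by a factor of $n$, the pair $(V, \tfrac{1}{n}D)$ is automatically klt away from that same finite set. It therefore suffices to check lc-ness at each of the finitely many bad points $x$.

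At a fixed bad point $x$, my plan is to apply \cite[Theorem 3.3]{Z-cpi} in conjunction with Lemma \ref{lem:nlc estimate} directly on $V$ (without any cutting down), in the same spirit as Lemmas \ref{lem:canonical on blowup}, \ref{lem:K-stable mult<=n-1}, and \ref{lem:D/n+(1-1/n)M lc}. Taking the Lemma \ref{lem:nlc estimate} parameter to be the very small value $\lambda = 1/n$, and choosing $L = aH$ with $a$ just above $\deg V - n - 1 + (1+n)\cdot \tfrac{\ell}{n}$, the hypotheses $\deg V \le n$ and $\ell/n \le n-2$ let me pick $a$ of order $n^{2}$ while keeping $L - (K_V + (1 + \tfrac{1}{\lambda})\cdot \tfrac{1}{n}D)$ ample. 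The sufficient condition from Theorem 3.3 then reduces to the numerical comparison
\[
h^0(V, aH) \;\le\; \binom{n+a}{n} \;\le\; (2en)^{n} \quad<\quad 2^{(n-1)/\lambda - 1} \;=\; 2^{n(n-1) - 1},
\]
and since the left-hand side grows like $2^{n \log_2 n}$ while the right-hand side grows like $2^{n^2}$, the inequality holds with enormous slack for $n \ge 250$. Thus $(V, \tfrac{1}{n}D)$ is lc at $x$ as required.

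The main obstacle, and the reason one cannot use any of the earlier arguments verbatim, is that the allowed size of $\ell/n$ is as large as $n-2$, so the degree of $L$ needed for ampleness is of order $n^2$ rather than $O(1)$. A naive application of Theorem \ref{lem:nlc estimate} with bounded $\lambda$ would then produce a polynomial $h^0$ bound competing against $2^{O(n)}$, which is not enough. The trick is to drive $\lambda$ down to $1/n$: this multiplies the exponent in the non-lc threshold by $n$, boosting it to $2^{n^{2}}$ and swamping the polynomial $(2en)^{n}$. No new machinery beyond what has already been set up in this section is required.
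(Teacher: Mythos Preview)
Your approach is exactly the paper's: apply \cite[Theorem 3.3]{Z-cpi} together with Lemma~\ref{lem:nlc estimate} directly on $V$ with a parameter $\lambda$ of order $1/n$, so that the threshold $2^{(\dim V)/\lambda-1}\sim 2^{n^2}$ swamps the polynomial $h^0$. The paper takes $L=n(n-2)H$ and $\lambda=\tfrac{1}{n-1}$ and checks $h^0(V,L)\le\binom{n(n-1)}{n}\le 2^{n(n-1)-1}$.

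There is one genuine slip in your parameter choice. In every application in this section, \cite[Theorem 3.3]{Z-cpi} takes as input a divisor $\Delta$ with $(V,\Delta)$ lc outside finitely many points and $L-(K_V+\Delta)$ ample, and outputs $\lct(V;\Delta)\ge\frac{\lambda}{\lambda+1}$. Here the only divisor for which you have the lc-outside-points hypothesis is $\Delta=D$ itself, so the conclusion is $\lct(V;D)\ge\frac{\lambda}{\lambda+1}$. To obtain $\lct(V;D)\ge\frac{1}{n}$ you need $\lambda\ge\frac{1}{n-1}$; with your $\lambda=\frac{1}{n}$ you only get $\lct(V;D)\ge\frac{1}{n+1}$, i.e.\ $(V,\frac{1}{n+1}D)$ lc, which is too weak. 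Equivalently, in your own phrasing the hypothesis of Theorem~3.3 would require $(V,(1+\tfrac{1}{\lambda})\cdot\tfrac{1}{n}D)=(V,\tfrac{n+1}{n}D)$ to be lc outside points, and this is not given. Replacing $\lambda=\tfrac{1}{n}$ by $\lambda=\tfrac{1}{n-1}$ fixes everything: then $(1+\tfrac{1}{\lambda})\cdot\tfrac{1}{n}=1$, the ampleness condition becomes simply $L-(K_V+D)$ ample (so $L=n(n-2)H$ suffices), and the numerical comparison becomes $\binom{n(n-1)}{n}\le(e(n-1))^n<2^{(n-1)^2-1}$, which holds with the same enormous slack you describe.
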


\begin{proof}
By \cite[Theorem 3.3]{Z-cpi} and Lemma \ref{lem:nlc estimate} (applied to $L=n(n-2)H$ and $\lambda=\frac{1}{n-1}$ respectively), we have $\lct(X;D)\ge \frac{1}{n}$ as long as $h^0(X,L)=\binom{n+n(n-2)}{n}\le 2^{n(n-1)-1}$. Since $\binom{n+m}{m}\le \frac{1}{2}2^{n+m}$ for all $n\neq m$, the result follows.
\end{proof}

We are now ready to finish the proof of Theorem \ref{thm:hypersurface}.

\begin{proof}[Proof of Theorem \ref{thm:hypersurface}(1)]
Let $D\sim_\bQ -K_X$ be an effective divisor on $X$ and let $M\sim_\bQ -K_X$ be a movable boundary. We claim that $(X,\Delta=\frac{n}{n+1}\Gamma)$ is klt where $\Gamma=\frac{1}{n}D+\frac{n-1}{n}M$. It then follows from \cite[Theorem 1.5]{Z-cpi} that $X$ is K-stable.

As $X$ has Picard number one, we may assume that $D$ is irreducible (being klt is preserved under convex linear combination). Let $x\in X$ be a point of multiplicity $m_0$. If $m_0\le n-1$, then by Lemma \ref{lem:K-stable mult<=n-1}, $(X,M)$ is lc at $x$ and $(X,\frac{1}{2}D)$ is klt at $x$, hence as $\Delta=\frac{2}{n+1}\cdot \frac{1}{2}D+(1-\frac{2}{n+1})M$ is a convex linear combination of $\frac{1}{2}D$ and $M$, we see that $(X,\Delta)$ is klt at $x$. It remains to consider the case $m_0=n$. After a change of coordinate, we may assume that $x=[0:\cdots:0:1]$ and the defining equation of $X$ can be written as $x_{n+1}f_n(x_0,\cdots,x_n)+f_{n+1}(x_0,\cdots,x_n)=0$ where $f_i$ ($i=n,n+1$) is homogeneous of degree $i$. Let $D_0=(f_n=0)\cap X$. As before, let $\pi:Y\rightarrow X$ be the blowup of $x$ with exceptional divisor $E$ and strict transforms $\tD$, $\tM$, $\tD_0$, $\tGamma$ and $\tDelta$.

Suppose first that $D$ is not supported on $D_0$, then as before we have $\ord_E D\le 1$ since $\tD\cdot \tD_0\cdot (\pi^*H-E)^{n-2}\ge 0$. Similarly $\ord_E M\le 1$ and thus $\mu=\ord_E\Gamma\le 1$. We claim that in a neighbourhood of $E$, $(Y,\pi^*\Gamma)$ is lc outside a finite number of point. By Lemma \ref{lem:D/n+(1-1/n)M lc}, $(X,\Gamma)$ is lc in a punctured neighbourhood of $x$. Thus as $E$ appears with coefficient at most $1$ in $\pi^*\Gamma$, by inversion of adjunction it suffices to show that $(E,\tGamma|_E)$ is lc outside a finite number of points. But as $E$ is a smooth hypersurface in $\bP^n$ and $\tGamma|_E\sim_\bQ -\mu E|_E\sim_\bQ \mu c_1(\cO_E(1))\cap [E]$ where $\mu\le 1$, this follows from \cite[Proposition 5]{Puk-hypersurface} and therefore $(Y,\pi^*\Gamma)$ is lc outside a finite number of point. By \cite[Theorem 1.6]{Z-cpi} applied to $(Y,\pi^*\Gamma)$ with $L=0$ (note that $K_Y+\pi^*\Gamma\sim_\bQ \pi^*(K_X+\Gamma)\sim_\bQ 0$ and $\pi^*\Gamma$ is nef and big) we obtain $\lct(Y;\pi^*\Gamma)\ge\frac{n}{n+1}$ with equality if and only if $\mult_y (\pi^*\Gamma) = n+1$ for some $y\in Y$. But if such $y$ exists, then clearly $y\in E$ and since $\pi^*\Gamma=\tGamma+\mu E$ where $\mu\le 1$, we have $\mult_y(\tGamma|_E)\ge\mult_y\tGamma\ge n$. Recall that $E$ is a smooth hypersurface of degree $n$ and $\tGamma|_E\sim_\bQ \mu c_1(\cO_E(1))$, we also have $\mult_y(\tGamma|_E)\le n\mu\le n$. Hence we must have equality everywhere, in particular, $\mu=1$ and $\mult_y\tM=\mult_y(\tM|_E)=n$. But as $\tM$ is movable, if we choose $n-3$ general members $P_1,\cdots,P_{n-3}$ of $|\pi^*H-E|$ passing through $y$ and another general member $Q$ of $|2\pi^*H-E|$ (which is very ample) containing $y$, then the intersection $\tM^2\cdot P_1\cdot\ldots\cdot P_{n-3}\cdot Q$ is zero dimensional and we get
\[n+2=2(H^n)-\deg E=(\tM^2\cdot P_1\cdot\ldots\cdot P_{n-3}\cdot Q)\ge (\mult_y\tM)^2=n^2,\]
a contradiction. Thus such $y$ doesn't exist and we indeed have $\lct(Y;\pi^*\Gamma)>\frac{n}{n+1}$. In other words, $(Y,\pi^*\Delta)$ (and hence also $(X,\Delta)$) is klt.

It remain to treat the case when $D$ is supported on $D_0$ (i.e. $D=\frac{1}{n}D_0$). As $\ord_E M\le 1$ and $\ord_E D=\frac{n+1}{n}$, we still have $\ord_E\Delta < 1$. Since $X=(x_{n+1}f_n+f_{n+1}=0)$ has only isolated singularities, a direct computation shows that $\tD_0|_E=(f_{n+1}=0)\cap E$ has only isolated singularities as well. By Lemma \ref{lem:D_0/n lc}, $(E,\tD|_E=\frac{1}{n}\tD_0|_E)$ is lc. On the other hand, as $E$ is a smooth hypersurface of degree $n$ in $\bP^n$ and $\tM|_E\sim_\bQ \mu c_1(\cO_E(1))$ where $\mu\le 1$, we see that $(E,\frac{n-1}{n}\tM|_E)$ is also lc by \cite[Corollary 1.7]{Z-cpi}. Taking convex linear combination $\tDelta=\frac{1}{n+1}\tD|_E+\frac{n}{n+1}\cdot \frac{n-1}{n}\tM|_E$, it follows that $(E,\tDelta|_E)$ is lc and thus by inversion of adjunction $(Y,E+\tDelta)$ is lc as well. As $(X,\Delta)$ is klt away from $x$, all the lc centers of $(Y,E+\tDelta)$ are contained in $E$, hence as $\ord_E\Delta < 1$, we deduce that $(Y,\pi^*\Delta)$ and $(X,\Delta)$ are both klt and this finishes the proof.
\end{proof}

\section{Counterexample} \label{sec:counterexample}

In this section we complete the proof of Theorem \ref{thm:counterexample}.

\iffalse
finish the construction of the counterexample to the openness of birational superrigidity. Let $x\in\bP^{n+1}$ and let $F=(f=0)$ and $G=(g=0)$ be hypersurfaces of degree $2m$ and $m$. Recall from the introduction that the family is given by
\[\cX=(y^2-f=ty-g=0)\subseteq \bP(1^{n+2},m)\times \bA^1_t\]
and $\cX_0$ is the double cover of $G$ branched along $F\cap G$. Suppose that $\mult_x F=2m-2$ and $\mult_x G=m-1$, then it is easy to see that $\mult_x \cX_t=2m-2$ hence $\cX_t$ is not birationally superrigid. Therefore, it remains to prove

\begin{lem}
Notation as above and let $n=2m-1$ and $m\gg 0$. Assume that
    \begin{enumerate}
        \item $F$ and $G$ have a unique ordinary singularity at $x$ with $\mult_x F_=2m-2$ and $\mult_x G_=m-1$ and are otherwise smooth,
        \item the projective tangent cone of $F\cap G$ at $x$ is a smooth complete intersection.
    \end{enumerate}
Let $X$ be the double cover of $G$ branched along $F\cap G$. Then $X$ is birationally superrigid.
\end{lem}
\fi

\begin{proof}[Proof of Theorem \ref{thm:counterexample}]
It is easy to see that the $\cX_{0,t}$ ($t\neq0$) is not birationally superrigid since the assumptions imply that $\mult_x\cX_{0,t}=2m-2$, hence a general line through $x$ in $\bP^{n+1}$ intersects the hypersurface $\cX_{0,t}$ in exactly two other points and therefore induces a birational involution by interchanging these two point. By \cite{dF-hypersurface} (applied to the smooth hypersurfaces $\cX_{s,t}$ when $st\neq 0$) and Theorem \ref{thm:weighted cpi} (applied to $\cX_{s,0}$, which are smooth double covers of hypersurfaces when $s\neq 0$), it is clear that $\cX_{s,t}$ is birationally superrigid when $s\neq 0$. It remains to show that $X=\cX_\origin$ is birationally superrigid where $\origin=(0,0)\in\bA^2$. For ease of notation, we let $f=f_0$, $F=F_0$, etc.

We may identify $x\in F\cap G$ with its preimage in $X$. By assumption $X$ is smooth outside $x$ and $-K_X\sim H$ where $H$ is the pullback of the hyperplane class on $G$. Let $M\sim_\bQ -K_X$, by \cite[Theorem 1.4.1]{Che-survey}, it suffices to show that $(X,M)$ has canonical singularities. Note that $X=(y^2-f=g=0)\subseteq \bP(1^{n+2},m)$ is a weighted complete intersection, thus by Corollary \ref{cor:mult-bound-wt-cpi}, there exists a constant (i.e. independent of $m$) $N_0\in \bZ$ such that $\mult_y (M^2)\le 1$ away from a subset of dimension at most $N_0$. It then follows from the proof of Theorem \ref{thm:weighted cpi} that there exists another constant $N_1$ such that $(X,M)$ is canonical over the smooth locus of $X$ when $n\ge N_1$. It remains to show that the pair is also canonical at $x$. Let $x_0,\cdots,x_{n+1},y$ be the weighted homogeneous coordinate of $\bP(1^{n+2},m)$ and after a change of coordinates we may put $x=[0:\cdots:0:1:0]$. Let $\pi:Y\rightarrow X$ be the weighted blowup at $x$ with associated weights $\wt(x_i)=1$ ($i=0,\cdots,n$) and $\wt(y)=m-1$. Then the exceptional divisor $E$ is isomorphic to the double cover of the projective tangent cone of $G$ branched over the projective tangent cone of $F$. In particular, $E\subseteq\bP(1^{n+1},m-1)$ is a smooth weighted complete intersection and $Y$ is also smooth. We can write $g=x_{n+1}g_{m-1}(x_0,\cdots,x_n)+g_m(x_0,\cdots,x_n)$ where $\deg(g_i)=i$ ($i=m-1,m$). Let $D=(g_{m-1}=0)\subseteq X$, then $\ord_E D=m$. Let $c=\ord_E M$ and let $\tM\sim_\bQ \pi^*H-cE$ (resp. $\tD\sim (m-1)\pi^*H-mE$) be the strict transform of $M$ (resp. $D$) on $Y$. Since $\tM$ is movable and $\pi^*H-E$ is base point free on $Y$, we have $0\le \tM\cdot \tD\cdot (\pi^*H-E)^{n-2}=(m-1)(H^n)-cm\deg E=2m(m-1)(1-c)$, thus $c\le 1$. We now show that $(Y,\tM)$ is canonical, then as $K_Y+\tM=\pi^*(K_X+M)+(1-c)E$, $(X,M)$ is also canonical as desired. The rest of the argument is similar to Theorem \ref{thm:hypersurface}. Let $Z=\tM^2$ and write $Z=Z_1+Z_2$ such that the irreducible components of $Z_1$ (resp. $Z_2$) are (resp. not) contained in $E$. We have $Z_1\sim_\bQ -bE^2$ for some $b\ge 0$. As in the proof of Theorem \ref{thm:hypersurface}, we have $(b+c^2)\deg E\le (H^n)$ (or equivalently $b+c^2\le \frac{m}{m-1}$. It then follows from Lemma \ref{lem:mult-bound-quasi-smooth} that $\mult_y Z_i\le \frac{m}{m-1}$ and $\mult_y Z\le \frac{2m}{m-1}$ outside a set of dimension at most $N_0$ (possibly by increasing the constant $N_0$). By \cite[Theorem 3.1]{Corti-4n^2-ineq} or \cite[Theorem 0.1]{dFEM-mult-and-lct}, there exists some $\mu>1$ such that $(Y,\mu\tM)$ is lc outside a set of dimension at most $N_0$. The same proof as in Theorem \ref{thm:hypersurface} then implies that $(Y,\tM)$ is canonical when $m\gg 0$. The proof is now complete.
\end{proof}

% \begin{cor}
% Birational superrigidity is not an open property in every sufficiently large odd dimensions.
% \end{cor}

We conclude the section with some questions. First, we expect some weaker variant of birational rigidity to be an open property. To make this precise we need a definition.

\begin{defn} \cite[Definition 1.4]{AO-pfaffian}
A Fano variety $X$ is said to be \emph{birationally solid} if there is no birational map $X\dashrightarrow Y$ to the source of a strict (i.e. the base has dimension at least $1$) Mori fiber space.
\end{defn}

Note that in our main example (Theorem \ref{thm:counterexample}) and in the 3-dimensional example \cite[Example 6.3]{CP-3fold-hypersurface}, every member of the family is birationally rigid and in particular solid. In the example of \cite{CG-rigidity-not-open}, it is also proved that a general member of the family is solid since there is at most one other Mori fiber space (which belongs to the same family) that's birational to it. These motivate the following question.

\begin{que}
Let $f:\cX\rightarrow T$ be a $\bQ$-Gorenstein family of Fano varieties. Is the set of $t\in T$ such that $\cX_t$ is birationally solid an open subset of $T$?
\end{que}

In another direction, recall that birational superrigidity is a constructible condition \cite[Corollary 7.8]{SC-log-model}; since birational superrigidity is closely related to the higher codimensional alpha invariants, we may ask:

\begin{que} \label{que:constructible}
Let $f:\cX\rightarrow T$ be a $\bQ$-Gorenstein family of Fano varieties. Is the function $t\mapsto \alpha^{(k)}(\cX_t)$ (or its analogue with canonical in place of log canonical in the definition) on $T$ a constructible function for each $k$?
\end{que}

\appendix

\section{On a conjecture of Tian}

In this appendix, we study some special cases of Question \ref{que:constructible} for the usual alpha invariants in a slightly more general context. In \cite[Conjecture 5.3]{Tian-alpha-defn}, Tian conjectured that in the definition \eqref{eq:alpha^(k)} of alpha invariants, the infimum is indeed a minimum:

\begin{conj} \label{conj:Tian}
Let $(X,\Delta)$ be a klt pair and $L$ an ample line bundle on $X$. Then there exists an integer $m>0$ such that
\[\alpha(X,\Delta;L)=\alpha_m(X,\Delta;L).\]
\end{conj}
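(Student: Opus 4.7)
The plan is to reformulate $\alpha(X,\Delta;L)$ as an infimum of a simple expression over valuations, locate a valuation that attains this infimum, and then realise it by a single linear system $|m_0 L|$. First I would write
\[
\alpha(X,\Delta;L) \;=\; \inf_v \frac{A_{X,\Delta}(v)}{T_L(v)},
\]
where $v$ ranges over non-trivial valuations on $X$ with $A_{X,\Delta}(v)<\infty$ and
\[
T_L(v) \;:=\; \lim_{m\to\infty} \frac{1}{m}\sup_{D\in |mL|} v(D).
\]
The limit exists by superadditivity of $m\mapsto \sup_{D\in|mL|} v(D)$ together with Fekete's lemma, and the identification with $\alpha(X,\Delta;L)$ is the standard valuative description of log canonical thresholds.

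Next I would try to produce a quasi-monomial valuation $v_\star$ realising this infimum. Starting from a minimising sequence of divisorial valuations $v_j$ normalised by $A_{X,\Delta}(v_j)=1$, I would appeal to compactness of the relevant locus in the Berkovich analytic space $X^{\mathrm{an}}$ and to Izumi-type bounds, in the spirit of Blum and Blum--Jonsson's analysis of minimisers of the $\delta$-invariant, to extract a quasi-monomial subsequential limit and to verify lower semicontinuity of $v\mapsto A_{X,\Delta}(v)/T_L(v)$ along such limits.

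Once such a $v_\star$ is available, I would use finite generation properties of the section ring $R(X,L)=\bigoplus_{m\ge 0}H^0(X,mL)$, together with the filtration it inherits from $v_\star$, to argue that $T_L(v_\star)$ is rational and attained at finite level: there exist an integer $m_0\ge 1$ and a divisor $D\in |m_0 L|$ with $v_\star(D)=m_0 T_L(v_\star)$. For such $m_0$,
\[
\lct\!\bigl(X,\Delta;\tfrac{1}{m_0}D\bigr) \;\le\; \frac{A_{X,\Delta}(v_\star)}{T_L(v_\star)} \;=\; \alpha(X,\Delta;L),
\]
so $\alpha_{m_0}(X,\Delta;L)\le \alpha(X,\Delta;L)$, and the reverse inequality is immediate from the definition of $\alpha$.

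The hard part is clearly the middle step: existence (and rationality) of a computing valuation is open in full generality, and even the rationality of $\alpha(X,\Delta;L)$ is not known for arbitrary polarisations. Recent breakthroughs on minimisers of the stability threshold for log Fano pairs suggest that the cleanest setting is $L\sim_\bQ-(K_X+\Delta)$, but for a general ample $L$ the problem remains delicate. In the appendix one presumably exploits the explicit geometry of the families under consideration -- for instance, bounded complexity of centres of log canonical singularities in a family of complete intersections, combined with the uniform convergence $\alpha_m\to\alpha$ furnished by Proposition \ref{prop:uniform convergence of alpha} -- to bypass the general minimiser problem and verify the conjecture in those concrete situations.
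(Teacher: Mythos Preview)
The conjecture is false in the stated generality, so your proof necessarily has a gap. The paper itself provides an explicit counterexample (the Remark and Example following Proposition~\ref{prop:alpha^2>alpha}): take $X=\bP_C(\cE)$ for a stable rank-$2$ degree-$0$ bundle $\cE$ on a curve $C$ of genus $\ge 2$ chosen so that $L_0=\cO_X(1)$ is pseudoeffective but not numerically equivalent to any effective divisor, and set $\Delta=0$, $L=L_0+aF$ with $a\ge 2$. Then $\alpha(X;L)=1/a$, yet no divisor $D\equiv L$ satisfies $\lct(X;D)=1/a$; in particular $\alpha_m(X;L)>\alpha(X;L)$ for every $m$.

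It is instructive to see exactly where your outline breaks in this example. A minimising valuation $v_\star$ \emph{does} exist: for $v_\star=\ord_F$ one has $A_X(\ord_F)=1$ and $T_L(\ord_F)=a$, so $A_X(v_\star)/T_L(v_\star)=1/a=\alpha(X;L)$. The failure occurs at the next step, where you invoke finite generation of $R(X,L)$ to conclude that $T_L(v_\star)$ is ``attained at finite level''. Here $T_L(\ord_F)=a$ is rational, but for every $m$ one has $\sup_{D\in|mL|}\ord_F D<ma$, since $mL-maF\sim mL_0$ has no sections by the choice of $\cE$. Finite generation of the section ring does not imply finite generation of the Rees algebra or associated graded ring attached to the $v_\star$-filtration, and that is precisely the obstruction.

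Your closing paragraph correctly anticipates that the appendix only handles special families, but the mechanism is different from what you guess. For smooth complete intersections $X\subseteq\bP^{n+r}$ the paper establishes a strict gap $\alpha^{(2)}(X;H)>1+\delta>1\ge\alpha(X;H)$ (Lemma~\ref{lem:alpha^2 of cpi}), and Proposition~\ref{prop:alpha^2>alpha} converts this gap, together with $\mathrm{Cl}(X)=\bZ\cdot H$, into an explicit upper bound on the $m$ for which $\alpha=\alpha_m$. Neither a minimising valuation nor the uniform convergence of Proposition~\ref{prop:uniform convergence of alpha} is used.
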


This is confirmed by Birkar \cite[Theorem 1.5]{Birkar-2} when $(X,\Delta)$ is log Fano, $L=-(K_X+\Delta)$ and $\alpha(X,\Delta;L)\le 1$. Another special case is smooth quartic surfaces \cite[Theorem 1.2]{on-conj-of-tian} where one can just take $m=1$. The main result of this appendix is:

\begin{thm} \label{thm:Tian's conj}
Fix $r\in\bN$ and $\epsilon>0$. Then there exists an integer $N$ such that for every smooth complete intersection $X\subseteq \bP^{n+r}$ of codimension $r$ and dimension $n\ge N$ such that $K_X\sim sH$ where $s\le (1-\epsilon)n$ and $H$ is the hyperplane class, we have $\alpha(X;H)=\alpha_m(X;H)$ for some integer $m$ that only depends on $n,r$ and $s$.
\end{thm}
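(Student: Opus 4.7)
The plan is to strengthen the uniform convergence argument of Proposition \ref{prop:uniform convergence of alpha} into an exact attainment statement, by exploiting the Pukhlikov--Suzuki type multiplicity bound for smooth complete intersections (Lemma \ref{lem:mult-bound-quasi-smooth}) to bound the complexity of the log canonical places that compute $\alpha(X;H)$.

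First I would verify the hypotheses of (the corollary to) Proposition \ref{prop:uniform convergence of alpha} for the bounded family of smooth complete intersections with fixed $(n,r,s)$: taking $r_0 = \lceil (1-\epsilon)n \rceil + 1$, $r_0 H$ is very ample and $r_0 H - K_X = (r_0 - s) H$ is ample; and the bound $\alpha^{(n)}(X;H) \leq n$ is immediate from $|H \otimes \fm_x|$ at a smooth point. So for any $\delta > 0$ there exists $M_0 = M_0(n,r,s,\delta)$ such that $\alpha_m(X;H) \leq \alpha(X;H) + \delta$ for all $m \geq M_0$. This gives uniform convergence but with $M_0$ depending on $\delta$.

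The essential improvement is a uniform bound $m_0 = m_0(n, r, s)$ with the following property: for every sufficiently small $\delta > 0$, some $m_\delta \leq m_0$ already satisfies $\alpha_{m_\delta}(X;H) \leq \alpha(X;H) + \delta$. Granted this, the set $\{\alpha_m(X;H) : 1 \leq m \leq m_0\}$ is finite and bounded below by $\alpha(X;H)$, so by pigeonhole there must exist $m^* \leq m_0$ with $\alpha_{m^*}(X;H) = \alpha(X;H)$. To produce $m_\delta$, I would take $\cM \subseteq |\ell H|$ with base locus of codimension $\geq 1$ realizing $\lct(X; \frac{1}{\ell}\cM) \leq \alpha(X;H) + \delta/2$, and let $E$ be an exceptional divisor computing the threshold. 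Lemma \ref{lem:mult-bound-quasi-smooth} forces $\mult_S(\frac{1}{\ell}\cM) \leq 1$ for every subvariety $S$ of dimension exceeding an explicit constant $N_0 = N_0(r)$, so the center $Z$ of $E$ has codimension bounded by some $k_0 = k_0(r)$. Combined with an Izumi-type inequality (available uniformly across the bounded family of smooth complete intersections), this yields a bound $a(E;X) + 1 \leq A_0(n,r,s)$ on the log discrepancy. Feeding this into the Nadel vanishing plus Castelnuovo--Mumford argument of Proposition \ref{prop:uniform convergence of alpha}, one produces a sub-linear series of $|m_\delta H|$, whose base locus has codimension $\geq 1$ and whose log canonical threshold is at most $\alpha(X;H) + \delta$, with $m_\delta$ controlled solely by $(n, r, s, A_0)$.

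The main obstacle is the uniform bound $a(E;X)+1 \leq A_0(n,r,s)$ on log discrepancies of the relevant log canonical places. The multiplicity estimate controls where the center of $E$ sits, but does not directly bound the log discrepancy. A promising approach is via direct intersection-theoretic analysis on a weighted blowup $\pi : Y \to X$ at the center $Z$, analogous to the arguments in Sections \ref{sec:weighted cpi}--\ref{sec:hypersurface}: inequalities of the form $(\tM^2 \cdot (\pi^*H - E)^{n-2}) \geq 0$ together with the multiplicity bound on $Z$ should give upper bounds on $\ord_E H$ and thereby pin down $a(E;X)+1$ uniformly in the family.
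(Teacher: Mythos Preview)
Your approach has a genuine gap: the uniform bound $a(E;X)+1\le A_0(n,r,s)$ on the log discrepancy of a divisor computing $\lct(X;\tfrac{1}{\ell}\cM)$ cannot be obtained from the ingredients you list. The multiplicity estimate of Lemma \ref{lem:mult-bound-quasi-smooth} only says that the center of $E$ has dimension at most some $N_0(r)$, but over a fixed center (even a single smooth point) there are divisors of arbitrarily large log discrepancy---iterated or weighted blowups already show this. An Izumi-type inequality goes the wrong way: it compares $\ord_E$ to $A(E)\cdot\mult_x$ and so gives information about $\ord_E$ once $A(E)$ is known, not a bound on $A(E)$ itself. And the intersection computation you propose, $(\tM^2\cdot(\pi^*H-E)^{n-2})\ge 0$, bounds $\ord_E M$ rather than $a(E;X)$. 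In fact, if you look back at the proof of Proposition \ref{prop:uniform convergence of alpha}, the log discrepancy $A$ cancels in the final estimate, so even a successful bound on $A$ would not feed into that argument the way you suggest.

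The paper's route is quite different and avoids log discrepancies entirely. The key observation is that $\mathrm{Cl}(X)=\bZ\cdot H$, so after passing to an irreducible divisor one has $D=\tfrac{1}{m}\Gamma$ with $\Gamma\in|mH|$ for a genuine integer $m$; it is this integer that one bounds. Since $\Gamma$ is reduced and irreducible, the pair $(X,m(1-c)D)$ is klt in codimension one for $0<c\ll 1$, so the multiplier ideal $\cJ(X,m(1-c)D)$ has cosupport of codimension $\ge 2$. The Nadel vanishing plus Castelnuovo--Mumford step then produces a linear system in $|(m+s+nr)H|$ whose base locus has codimension $\ge 2$, and the resulting inequality compares not to $\alpha$ but to $\alpha^{(2)}(X;H)$. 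A separate argument (Lemma \ref{lem:alpha^2 of cpi}, proved by the methods of \S\ref{sec:weighted cpi}) shows $\alpha^{(2)}(X;H)>1+\delta$ for some $\delta=\delta(n,r)>0$, while $\alpha(X;H)\le 1$ trivially. The resulting gap $\alpha^{(2)}-\alpha\ge\delta$ is what forces $m\le\frac{(s+nr+\alpha^{(2)})\alpha}{\alpha^{(2)}-\alpha}$, a bound depending only on $(n,r,s)$. So the missing idea is to pass to the codimension-two alpha invariant and exploit the gap $\alpha^{(2)}>\alpha$, rather than to control the log discrepancy of an individual valuation.
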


Note that in the context of Question \ref{que:constructible}, if Conjecture \ref{conj:Tian} holds for the fibers of $f$ with a uniform choice of $m$, then the alpha invariant function is constructible. Hence the above result implies the following special case of Question \ref{que:constructible}:

\begin{cor} \label{cor:constructible}
Let $r\in\bN$ and let $\cX\subseteq \bP^{n+r}\times T$ be a smooth family of Fano complete intersections of codimension $r$ over $T$. Assume that $n\ge 10r$, then the function $t\mapsto \alpha(\cX_t;H)$ is constructible.
\end{cor}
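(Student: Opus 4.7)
The plan is to combine Theorem \ref{thm:Tian's conj}, which supplies a uniform $m$ across the family, with the standard constructibility of log canonical thresholds in flat families.

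First, I would observe that since $\cX \to T$ is flat with fibers of constant Hilbert polynomial, the multidegree $(d_1, \ldots, d_r)$ of the fibers is constant on $T$, and hence so is the integer $s$ defined by $K_{\cX_t} \sim sH$. As each fiber is Fano, $s \le -1$, so taking $\epsilon = \tfrac{1}{2}$ the hypothesis $s \le (1-\epsilon)n$ of Theorem \ref{thm:Tian's conj} holds automatically; the bound $n \ge 10r$ is precisely the dimension threshold ensuring $n \ge N(r, \tfrac{1}{2})$. Applying the theorem yields an integer $m$, depending only on $n, r, s$ and in particular independent of $t$, such that $\alpha(\cX_t; H) = \alpha_m(\cX_t; H)$ for every $t \in T$.

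It therefore suffices to show that $t \mapsto \alpha_m(\cX_t; H)$ is constructible. After stratifying $T$ so that $h^0(\cX_t, mH)$ is constant on each stratum (a standard Noetherian operation), one may form the relative parameter space $\pi : \mathcal{P} \to T$ whose fiber over $t$ is the disjoint union of Grassmannians parameterizing the sub linear systems $\cM \subseteq |mH|$ on $\cX_t$. By the standard constructibility of log canonical thresholds in flat families (see e.g. \cite{Kol-sing-of-pair}), the function $(t, [\cM]) \mapsto \lct(\cX_t; \tfrac{1}{m}\cM)$ takes only finitely many values on $\mathcal{P}$, say $c_1 < \cdots < c_N$ on constructible strata $\mathcal{P}_i \subseteq \mathcal{P}$. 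By Chevalley's theorem, each image $Z_i := \pi(\mathcal{P}_i) \subseteq T$ is constructible, and hence $\alpha_m(\cX_t; H) = \min\{c_i : t \in Z_i\}$ is a constructible function on $T$.

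The main obstacle is Theorem \ref{thm:Tian's conj} itself, which supplies the uniform $m$; once this uniformity is in hand, the rest is essentially a bookkeeping exercise using Chevalley's theorem and the constructibility of $\lct$ in families. Without such uniformity one would have to contend with $\alpha(\cX_t; H) = \inf_m \alpha_m(\cX_t; H)$ as an infimum of countably many constructible functions, which a priori need not be constructible.
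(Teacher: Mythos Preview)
Your overall strategy is the same as the paper's: obtain a uniform $m$ via Theorem \ref{thm:Tian's conj} and then deduce constructibility of $t\mapsto\alpha_m(\cX_t;H)$. The paper in fact takes the second step for granted (it is asserted in the paragraph preceding the corollary), so your Chevalley argument fills in detail the paper omits; note, though, that you only need to parametrize single divisors $D\in|mH|$ rather than all sub linear systems, since for any $\cM\subseteq|mH|$ a general $D\in\cM$ has $\lct(\cX_t;\frac{1}{m}D)=\lct(\cX_t;\frac{1}{m}\cM)$.

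There is, however, a genuine gap in your first step. Theorem \ref{thm:Tian's conj} only asserts the \emph{existence} of some $N=N(r,\epsilon)$; it gives no explicit bound, so your claim that ``the bound $n\ge 10r$ is precisely the dimension threshold ensuring $n\ge N(r,\tfrac12)$'' is unjustified as written. The paper's proof does not invoke Theorem \ref{thm:Tian's conj} as a black box; instead it traces back through its proof to the only place where largeness of $n$ is used, namely the inequality \eqref{eq:h^0(L)} in Lemma \ref{lem:alpha^2 of cpi},
\[
\binom{n+s+r+2}{s+2r+1}<\frac{1}{n-2r+1}\binom{2(n-2r)}{n-2r},
\]
and then verifies directly that this holds whenever $s\le -1$ and $n\ge 10r$. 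You should either carry out this verification or at least make explicit that the constant $10r$ is tailored to this inequality rather than to an abstract $N(r,\epsilon)$.
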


The proof of Theorem \ref{thm:Tian's conj} is based on the following criterion.

\begin{prop} \label{prop:alpha^2>alpha}
Let $(X,\Delta)$ be a klt pair of dimension $n$ and $L$ an ample $\bQ$-Cartier divisor on $X$. Let $r>0$, $s\ge 0$ be integers. Assume that
    \begin{enumerate}
        \item $sL-(K_X+\Delta)$ is nef and $rL$ is globally generated,
        \item the class group $\mathrm{Cl}(X)$ is generated by $L$,
        \item $\alpha :=\alpha(X,\Delta;L)<\alpha_2 :=\alpha^{(2)}(X,\Delta;L)$.
    \end{enumerate}
Then either $\alpha(X,\Delta;L)=\lct(X,\Delta;D)$ for some $D\sim_\bQ L$ supported on an irreducible component of $\Delta$ or there exists an integer $m\le \frac{(s+nr+\alpha_2)\alpha}{\alpha_2 -\alpha}$ such that $\alpha(X,\Delta;L)=\alpha_m(X,\Delta;L)$.
\end{prop}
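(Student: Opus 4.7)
The plan is to exploit the gap $\alpha<\alpha_2$ via two ingredients: a structural analysis showing that any near-optimal sub-linear system must be essentially a single effective divisor, and a Nadel vanishing construction bounding the degree of that divisor by the explicit $M_0=\alpha(s+nr+\alpha_2)/(\alpha_2-\alpha)$.

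Using $\mathrm{Cl}(X)=\bZ\langle L\rangle$, decompose any $\cM\subseteq|mL|$ as $\cM=B+\cM'$, where $B\in|\ell L|$ is the fixed divisorial part and $\cM'$ has base locus of codimension $\ge 2$. Set $\lambda=\ell/m$, $D=B/\ell\sim_\bQ L$, $\gamma=\lct(X,\Delta;D)$, and $\alpha_m=\lct(X,\Delta;\tfrac{1}{m}\cM)$. The definition of $\alpha_2$ gives $\lct(X,\Delta;\tfrac{1}{m}\cM')\ge\alpha_2/(1-\lambda)$, so letting $E$ be any divisor over $X$ computing $\alpha_m$ with log discrepancy $A=a(E;X,\Delta)+1$, and splitting $\ord_E(\tfrac{1}{m}\cM)=\ord_E(\tfrac{1}{m}B)+\ord_E(\tfrac{1}{m}\cM')$, one obtains
\[\gamma\;\le\;\frac{\lambda\,\alpha_m\,\alpha_2}{\alpha_2-\alpha_m(1-\lambda)}.\]
Combined with $\gamma\ge\alpha$, this rearranges to $\lambda\ge\alpha(\alpha_2-\alpha_m)/[\alpha_m(\alpha_2-\alpha)]$; whenever $\alpha_m=\alpha$ is actually achieved, $\lambda\ge 1$, hence $\lambda=1$ and $\gamma=\alpha$, so that $\cM=B$ is a single divisor and $D=B/m\sim_\bQ L$ computes $\alpha$.

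Assuming this $D$ is not supported on any component of $\Delta$, I apply Nadel vanishing to the multiplier ideal $\cJ_k:=\cJ(X,\Delta+kD)$ for a suitably chosen integer $k$. Since $sL-(K_X+\Delta)$ is nef and $L$ is ample, $(k+s+1)L-(K_X+\Delta+kD)$ is ample, so $H^{\ge 1}(X,\cJ_k\otimes\cO((k+s+1)L))=0$, and Castelnuovo--Mumford regularity with the globally generated $rL$ makes $\cJ_k\otimes\cO((k+s+1+nr)L)$ globally generated. Taking $E$ now to compute $\gamma=\alpha$ gives $\ord_E\cJ_k\ge A(k/\alpha-1)$, so the resulting sub-linear system $\cM^*\subseteq|(k+s+1+nr)L|$ satisfies
\[\lct\bigl(X,\Delta;\tfrac{1}{k+s+1+nr}\cM^*\bigr)\;\le\;\frac{(k+s+1+nr)\,\alpha}{k-\alpha}.\]
Choosing $k$ just large enough that this upper bound meets $\alpha_2$ places $m^*:=k+s+1+nr$ essentially at the threshold $M_0$; the system $\cM^*$ then cannot have codimension $\ge 2$ base locus (else its lct would be $\ge\alpha_2$), so applying the structural step to $\cM^*$ extracts a divisor $B^*\sim\ell^*L$ with $\ell^*\le m^*\le M_0$ and $\lct(X,\Delta;B^*/\ell^*)=\alpha$, yielding $\alpha_{\ell^*}=\alpha$ with $\ell^*\le M_0$ as desired.

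The main obstacle is the precise bookkeeping of integer rounding in Nadel vanishing and the exact matching of the constant $M_0$ in the statement, which requires delicate tuning of the parameter $k$ around $M_0-(s+nr)$ and careful comparison of the $\alpha$ and $\alpha_2$ bounds. The degenerate alternative of the conclusion arises exactly when the above extraction process terminates with a divisor $D$ supported on an irreducible component of $\Delta$, in which case the Nadel construction cannot strictly reduce the degree while preserving the $\alpha$-achieving property, and the first alternative is the only outcome.
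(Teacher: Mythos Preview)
Your argument has a genuine circularity. The structural step concludes that \emph{if} some $\cM\subseteq|mL|$ achieves $\lct(X,\Delta;\tfrac1m\cM)=\alpha$ exactly, then $\lambda=1$ and $\cM$ is a single divisor $D$ with $\lct(X,\Delta;D)=\alpha$. You then feed this $D$ into the Nadel step. But the existence of a $D$ achieving $\alpha$ is precisely the content of the proposition; you have assumed the conclusion. If you instead work with a near-optimal $\cM$ (say $\lct<\alpha+\epsilon$), your inequality only gives $\lambda$ close to $1$ and $\gamma$ close to $\alpha$, and the extracted divisor still has uncontrolled degree $\ell\approx m$.

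The Nadel step is also misdirected. Even granting a $D$ that achieves $\alpha$, you produce $\cM^*\subseteq|(k+s+1+nr)L|$ with $\lct\le\tfrac{(k+s+1+nr)\alpha}{k-\alpha}$, choose $k$ so this bound falls below $\alpha_2$, and then assert that the structural step applied to $\cM^*$ extracts $B^*$ with $\lct(X,\Delta;B^*/\ell^*)=\alpha$. But your structural step needs $\lct(\tfrac{1}{m^*}\cM^*)=\alpha$ to force $\lambda^*=1$; you only know $\alpha\le\lct(\tfrac{1}{m^*}\cM^*)<\alpha_2$, so you cannot conclude $\gamma^*=\alpha$. (Also, the $m^*$ you obtain is approximately $\tfrac{\alpha_2(s+1+nr+\alpha)}{\alpha_2-\alpha}$, which exceeds the stated $M_0$.)

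The paper's route avoids both problems by first using $\mathrm{Cl}(X)=\bZ\langle L\rangle$ and convexity of lct to reduce the computation of $\alpha$ to \emph{irreducible} $D=\tfrac1m\Gamma$ with $\Gamma\in|mL|$. For any such near-optimal $D$ with $\Gamma$ not a component of $\Delta$, one applies Nadel to $(X,\Delta+m(1-c)D)$ for $0<c\ll1$: irreducibility guarantees the coefficient of $\Gamma$ is $1-c<1$, so the resulting system in $|(m+s+nr)L|$ has base locus of codimension $\ge 2$ and hence lct $\ge\alpha_2$. Comparing this lower bound against the upper bound coming from the valuation $E$ directly yields $m\le M_0$. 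Since only finitely many integers $m$ remain, the infimum is achieved. In short, you should reverse the direction of your Nadel step: use it to bound the degree $m$ of a given near-optimal irreducible divisor via the codimension-two property of the output, rather than to manufacture a new divisor and then try to certify that it achieves $\alpha$.
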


\begin{proof}
The argument is very similar to the proof of Propsition \ref{prop:uniform convergence of alpha}. Let $D\sim_\bQ L$ be an effective divisor. Since $X$ is $\bQ$-factorial and $\rho(X)=1$ by assumption, each irreducible component of $D$ is $\bQ$-linearly equivalent to some rational multiple of $L$. As being lc is closed under convex linear combination, we may replace $D$ by the suitable multiple of one of its irreducible components without increasing $\lct(X,\Delta;D)$. It follows that $\alpha(X,\Delta;L)$ is also the infimum of $\lct(X,\Delta;D)$ for all irreducible $D\sim_\bQ L$.

Let $0<\epsilon\ll 1$. Let $D\sim _\bQ L$ be an irreducible divisor such that $\lct(X,\Delta;D)<\alpha+\epsilon$. Write $D=\lambda\Gamma$ for some irreducible and reduced divisor $\Gamma\subseteq X$ and $\lambda>0$. Since $\mathrm{Cl}(X)$ is generated by $L$, we have $\Gamma\in |mL|$ for some integer $m$ and thus $\lambda=\frac{1}{m}$ since $D\sim _\bQ L$. As in the proof of Propsition \ref{prop:uniform convergence of alpha}, for sufficiently small $c>0$ we have
\[\ord_E \cJ(X,\Delta+m(1-c)D) > A \left( \frac{m(1-c)}{\alpha+\epsilon}-1 \right)\]
where $E$ is an exceptional divisor over $X$ that computes $\lct(X,\Delta;D)$ and $A=a(E;X,\Delta)+1$ is the log discrepancy.
Letting $c\rightarrow 0$ we get
\[\ord_E \cJ(X,\Delta+m(1-c)D) \ge A \left( \frac{m}{\alpha+\epsilon}-1 \right).\]
Moreover, $\cJ(X,\Delta+m(1-c)D)\otimes \cO_X((m+s+nr)L)$ is globally generated and gives rise to a sub linear series $\cM\subseteq |(m+s+nr)L|$ with
\begin{equation} \label{eq:lct(M)}
    \lct(X,\Delta;\frac{1}{m+s+nr}M)\le \frac{(m+s+nr)A}{\ord_E \cJ(X,\Delta+m(1-c)D)}\le\frac{m+s+nr}{m-\alpha-\epsilon}(\alpha+\epsilon).
\end{equation}
Now if $\Gamma$ is not a component of $\Delta$, then $(X,\Delta+m(1-c)D)$ is klt in codimension one, thus the base locus of $\cM$ has codimension at least two and we have $\lct(X,\Delta;\frac{1}{m+s+nr}M)\ge \alpha_2$ by definition. Combined with \eqref{eq:lct(M)} this yields
\[\alpha_2\le \frac{m+s+nr}{m-\alpha-\epsilon}(\alpha+\epsilon),\]
or equivalently (using $0<\epsilon\ll 1$), $m\le \frac{(s+nr+\alpha_2)\alpha}{\alpha_2 -\alpha}$. In other words, we have shown that $\alpha(X,\Delta;D)$ is either computed by a component of $\Delta$ or given by the infimum of $\lct(X,\Delta;\frac{1}{m}\Gamma)$ over all $m\le \frac{(s+nr+\alpha_2)\alpha}{\alpha_2 -\alpha}$ and $\Gamma\in |mL|$. This concludes the proof.
\end{proof}

Thus in order to prove Theorem \ref{thm:Tian's conj}, we need to exhibit a gap between the codimension 2 alpha invariant and the usual alpha invariant of a smooth complete intersection. Since it is clear that $\alpha(X;H)\le 1$, it suffices to show that $\alpha^{(2)}(X;H)>1+\delta$ for some absolute constant $\delta>0$. This can be done using the same argument as in \cite{Z-cpi}.

\begin{lem} \label{lem:lct>1/2+delta}
There exists a constant $\delta=\delta(n)>0$ depending only on $n$ such that if $X$ is a smooth projective variety of dimension $n$, $D$ an effective $\bQ$-divisor on $X$ and $L$ a line bundle such that
    \begin{enumerate}
        \item $L-(K_X+(1-\epsilon)D)$ is nef and big for all $0<\epsilon\ll 1$,
        \item $(X,D)$ is lc outside a finite number of points, and
        \item $h^0(X,L)<\frac{1}{n}\binom{2n-2}{n-1}$,
    \end{enumerate}
then $\lct(X;D)>\frac{1}{2}+\delta$.
\end{lem}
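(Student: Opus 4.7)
The plan is to prove the contrapositive in a quantitative form: if $c:=\lct(X;D)\le\tfrac{1}{2}+\delta$ for $\delta=\delta(n)>0$ chosen small enough, then $h^0(X,L)\ge\tfrac{1}{n}\binom{2n-2}{n-1}$, contradicting hypothesis~(3). Hypothesis~(2) together with $c<1$ forces the non-klt locus of $(X,cD)$ to be a non-empty finite set; fix an isolated non-klt point $x$, and a log canonical place $E$ over $x$, with log discrepancy $A=a_E(X)$ and $\ord_E D=A/c$.

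Sections of $L$ are produced exactly as in the proof of Proposition \ref{prop:uniform convergence of alpha}: for suitable integers $m$ with $(c-\epsilon)m$ slightly less than $1$, the multiplier ideal $\cJ\bigl(X,(c-\epsilon)mD\bigr)$ is cosupported on finitely many points by~(2); Nadel vanishing, available through hypothesis~(1), yields $H^i(X,\cJ\otimes(m+O_n(1))L)=0$ for $i>0$; and Castelnuovo-Mumford regularity then produces a sub-system of $|(m+O_n(1))L|$ with isolated base locus whose general member vanishes along $E$ to order at least $A\bigl(\tfrac{(c-\epsilon)m}{c}-1\bigr)+O_n(1)$. Applying this construction iteratively on a chain $X=X_0\supset X_1\supset\cdots\supset X_{n-1}$ of general hyperplane sections through $x$, where inversion of adjunction preserves the non-klt singularity at $x$ of each $(X_i,cD|_{X_i})$, produces an array of sections of $L$ whose prescribed vanishing orders along $E$ are indexed by monotone lattice paths staying below a diagonal. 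The standard ballot/reflection bookkeeping enumerates such paths by the Catalan number $C_{n-1}=\tfrac{1}{n}\binom{2n-2}{n-1}$, giving the required lower bound on $h^0(X,L)$.

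Upgrading the non-strict conclusion $\lct\ge\tfrac{1}{2}$ to the strict one $\lct>\tfrac{1}{2}+\delta$ amounts to tracking the slack $c-\tfrac{1}{2}\le\delta$ quantitatively through the $n-1$ cutting steps: at every stage the vanishing order $A(\tfrac{(c-\epsilon)m}{c}-1)$ of the produced section along $E$ exceeds the minimum required by the ballot count by an amount proportional to $\delta$, and this excess, accumulated across the chain, forces one additional section beyond the Catalan total. I expect the main obstacle to be precisely this quantitative bookkeeping: one must choose $\delta=\delta(n)$ small enough that the accumulated slack is not devoured either by the integer-part loss in $\ord_E\cJ\ge\lfloor\ord_E(tD)-A\rfloor$ or by the $O_n(1)$ error from the Castelnuovo-Mumford step. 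Since both loss terms depend only on $n$, such a $\delta(n)>0$ exists, and the contradiction with~(3) then gives the desired bound.
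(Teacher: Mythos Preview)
Your approach diverges substantially from the paper's, and as written it has genuine gaps.

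The paper's proof is short because the main estimate is outsourced to \cite[Theorem~1.6]{Z-cpi}: under hypotheses (1) and (2), that result yields $\lct(X;D)>\tfrac{\lambda}{1+\lambda}$ provided every zero-dimensional ideal $\cI\subseteq\cO_X$ of colength less than $h^0(X,L)$ satisfies $\lct(X;\cI)>\lambda$. The only work done in the present lemma is to supply a uniform $\lambda=1+\delta_0(n)>1$. This comes from a finiteness argument: degenerate to monomial ideals (lower semicontinuity of lct), observe that there are only finitely many monomial ideals of colength $<\bar\sigma_n$ (where $\bar\sigma_n$ is the minimal number of lattice points in a simplex $Q_{\mathbf a}$ whose closure contains $(1,\ldots,1)$), and since each has $\lct>1$ by the very definition of $\bar\sigma_n$, the minimum over this finite set is some $1+\delta_0>1$. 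The Catalan bound $\bar\sigma_n\ge\tfrac1n\binom{2n-2}{n-1}$ is a separate combinatorial step, obtained by the cyclic-permutation averaging trick of \cite{Kollar-survey}: for any $(m_1,\ldots,m_n)$ with $\sum m_i\le n-1$, at least one cyclic permutation lies in $Q_{\mathbf a}$.

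Your proposal tries instead to build the section count from scratch via iterated hyperplane slicing, and several steps fail. First, Castelnuovo--Mumford regularity requires a globally generated twisting bundle, which is not among the hypotheses here; it is also unnecessary, since $\cJ(X,(1-\epsilon)D)$ is already cosupported on points and Nadel vanishing alone gives $h^0(X,L)\ge\ell(\cO_X/\cJ)$. Moreover, the output you describe would be sections of $(m+O_n(1))L$ rather than of $L$, whereas the contradiction must be with $h^0(X,L)$ itself. Second, the chain $X=X_0\supset X_1\supset\cdots$ does not manufacture sections of $L$ on $X$: sections constructed on the slices $X_i$ do not lift, and hypothesis~(1) is not inherited by $X_i$ because adjunction shifts the canonical class. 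Third, the ``ballot/reflection bookkeeping indexed by monotone lattice paths'' is not a recognizable argument in this setting; the Catalan number in the paper enters via counting lattice points in a fixed simplex, not via any section-producing induction. Finally, your route to the strict $\delta$ --- accumulating slack across the cutting chain --- rests on the chain itself, which is unsound; the paper's $\delta$ comes from the finiteness of monomial ideals of bounded colength, an idea absent from your sketch.
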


\begin{proof}
Let $\bar{\sigma}_n = \min\{\#(Q_\mathbf{a}\cap \bZ^n)\,|\,\mathbf{a}\in\bR^n_+\;\mathrm{s.t.}\;(1,1,\cdots,1)\in\overline{Q_\mathbf{a}}\}$ where
\[Q_{\mathbf{a}} = \{\mathbf{x}\in\bR^n_{\ge0}\,|\,\mathbf{a}\cdot\mathbf{x} < 1\}.\]
By the proof of \cite[Lemma 3.4]{Z-cpi} (or \cite[Theorem 1.1]{dFEM-mult-and-lct}), if $\cI$ is a monomial ideal co-supported at $0\in \bA^n$ such that $\ell(\cO_{\bA^n}/\cI)<\bar{\sigma}_n$, then $(\bA^n,\cI)$ is klt. Since there are only finitely many such monomial ideals, there exists a constant $\delta_0>0$ depending only on $n$ such that $\lct(\bA^n;\cI)>1+\delta_0$ for all such $\cI$. Hence by lower semi-continuity of lct as in the proof of \cite[Lemma 3.4]{Z-cpi}, for any $x\in X$ and $\cI\subseteq \cO_X$ co-supported at $x$ such that $\ell(\cO_X/\cI)<\bar{\sigma}_n$, we have $\lct(X;\cI)>1+\delta_0$. Therefore, by \cite[Theorem 1.6]{Z-cpi} we have $\lct(X;D) > \frac{1+\delta_0}{2+\delta_0} > \frac{1}{2}+\delta$ for some constant $\delta$ depending only on $n$ as long as $h^0(X,L)<\bar{\sigma}_n$. Thus it remains to prove that $\bar{\sigma}_n\ge \frac{1}{n}\binom{2n-2}{n-1}$. This comes from the fact that if $(1,1,\cdots,1)\in \overline{Q_\mathbf{a}}$ so that $\sum_{i=1}^n a_i\le 1$ and $m_1,\cdots,m_n\in \bZ$ satisfy $\sum_{i=1}^n m_i\le n-1$, then at least one cyclic permutation of $(m_1,\cdots,m_n)$ lies in $Q_\mathbf{a}$ (see \cite[Paragraph 57]{Kollar-survey}).
\end{proof}

\begin{lem} \label{lem:alpha^2 of cpi}
Fix $r\in \bN$, $\epsilon>0$ and let $N\gg 0$. Let $X\subseteq\bP^{n+r}$ be a smooth complete intersection of dimension $n$ and codimension $r$. Let $H$ be the hyperplane class. Suppose $K_X=sH$, $s\le (1-\epsilon)n$ and $n\ge N$, then $\alpha^{(2)}(X;H)>1+\delta$ for some $\delta>0$ that only depends on $n$ and $r$.
\end{lem}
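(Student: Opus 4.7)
The plan is to follow the three-step pattern of Theorem~\ref{thm:weighted cpi}, adapted to the codimension-$2$ alpha invariant. Fix a sub linear system $\cM\subseteq|mH|$ whose base locus has codimension at least $2$ and set $M=\frac{1}{m}\cM$, so that $M\sim_\bQ H$ and $M^2$ is a well-defined pure codimension-$2$ cycle on $X$ with $M^2\sim_\bQ H^2$. The aim is to produce $\delta=\delta(n,r,\epsilon)>0$ for which $(X,(1+\delta)M)$ is log canonical, from which $\alpha^{(2)}(X;H)>1+\delta$ follows.

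First I would invoke Lemma~\ref{lem:mult-bound-quasi-smooth} with its parameters $s=0$ and $k=2$: this produces a constant $N_0=N_0(r)$ (one may take $N_0=2r$) and a closed subset $V\subseteq X$ of dimension at most $N_0-1$ such that $\mult_x M^2\le 1$ for every $x\in X\setminus V$. For such $x$, take a general $2$-dimensional linear section $T\subseteq X$ through $x$: then $\mult_x(M|_T)^2=\mult_x M^2\le 1$, so by \cite[Theorem~2.2]{dFEM-mult-and-lct} one has $\lct_x(T;M|_T)\ge 2$, and inversion of adjunction through the $n-2$ cutting hyperplanes propagates this to $\lct_x(X;M)\ge 2$. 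Hence $(X,(1+\delta)M)$ is log canonical at every $x\in X\setminus V$ for any $\delta<1$.

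To extend across the bad locus, for $x\in V$ I cut $X$ by $N_0$ additional general hyperplane sections through $x$ to obtain a smooth linear section $Y\subseteq X$ of dimension $n-N_0$ with $Y\cap V=\{x\}$, which is possible by genericity once $n\ge N_0+1$. By the preceding step $(Y,(1+\delta)M|_Y)$ is log canonical on $Y\setminus\{x\}$. I would then apply the criterion \cite[Theorem~3.3]{Z-cpi} (equivalently the form of \cite[Corollary~1.8]{Z-cpi} used in the proof of Theorem~\ref{thm:weighted cpi}) with $L=aH|_Y$ for the smallest $a\ge 0$ making $L-(K_Y+(1+\delta)M|_Y)$ nef and big. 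Writing $K_X\sim sH$ gives $K_Y\sim(s+N_0)H|_Y$, so it suffices to take $a=\max\{0,\,s+N_0+1+\delta\}$. The hypothesis $s\le(1-\epsilon)n$ keeps the ratio $a/n$ bounded away from $1$; comparing $h^0(Y,L)\le\binom{n+r+a}{a}$ to the non-lc-threshold bound from Lemma~\ref{lem:nlc estimate}, $\lnlc(x,Y;1+\delta)>2^{(n-N_0)/(1+\delta)-1}$, together with the sharper $\ell^\ell/\ell!$ bound with $\ell=\dim Y$ available from \cite[Corollary~1.8]{Z-cpi}, a combinatorial estimate then shows that $h^0(Y,L)<\lnlc(x,Y;1+\delta)$ holds for $n\ge N(r,\epsilon)$ and some explicit $\delta=\delta(n,r,\epsilon)>0$. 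The criterion yields $(Y,(1+\delta)M|_Y)$ log canonical at $x$, and a final inversion of adjunction lifts this back to $(X,(1+\delta)M)$ at $x$.

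The main obstacle is verifying the numerical inequality $h^0(Y,L)<\lnlc(x,Y;1+\delta)$ uniformly across the allowed range $s\le(1-\epsilon)n$. In the high-$s$ regime the $h^0$-side is controlled by $\binom{(2-\epsilon)n+O(r)}{(1-\epsilon)n+O(r)}$, whose binary-entropy exponent $(2-\epsilon)H_2((1-\epsilon)/(2-\epsilon))$ approaches but stays strictly below $2$, and it must be beaten by the exponential bound on $\lnlc$ (with exponent close to $\log_2 e$ using the sharper criterion); in the low-$s$ regime $a$ is merely $O_{r,\delta}(1)$, so the $h^0$-side is polynomial in $n$ and the comparison is immediate. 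Threading both regimes simultaneously, extracting an honest positive $\delta$, and locating the threshold $N=N(r,\epsilon)$ is the delicate combinatorial point that the hypothesis $s\le(1-\epsilon)n$ is tailored to accommodate.
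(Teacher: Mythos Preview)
Your overall architecture matches the paper: bound $\mult_x(M^2)\le 1$ outside a locus of dimension $\le 2r-1$ via \cite[Proposition~2.1]{Suzuki-cpi}, deduce $(X,2M)$ is lc there by \cite{dFEM-mult-and-lct} and inversion of adjunction, then cut down to a linear section $Y$ on which the bad locus is a single point and apply an isolated-non-lc criterion. The gap is in this last step, and it is genuine.

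The estimates you invoke are not strong enough when $\epsilon$ is small. You correctly observe that the $h^0$-side has base-$2$ exponent $(2-\epsilon)H_2\bigl(\tfrac{1-\epsilon}{2-\epsilon}\bigr)\cdot n$, which stays strictly below $2n$; but the comparison you need is that this is below the exponent on the $\lnlc$ side, and that fails. Lemma~\ref{lem:nlc estimate} with $\lambda=1+\delta$ gives exponent $(n-N_0)/(1+\delta)<n$, and even the sharper $c^c/c!$ threshold from \cite[Corollary~1.8]{Z-cpi} has exponent only $(\log_2 e)\cdot c\approx 1.44\,n$. At $\epsilon=0.1$ the $h^0$-exponent already exceeds $1.89\,n$, so your inequality $h^0(Y,L)<c^c/c!$ is false by a factor of roughly $2^{0.45\,n}$; the hypothesis $s\le(1-\epsilon)n$ keeps the $h^0$-exponent below $2n$, not below $1.44\,n$. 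There is also a secondary issue: \cite[Corollary~1.8]{Z-cpi} concludes only that the pair is lc, i.e.\ $\lct\ge 1$, and the route through Lemma~\ref{lem:nlc estimate} never yields $\lct>1$; extracting a uniform positive $\delta$ requires an additional argument.

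The paper closes both gaps with the auxiliary Lemma~\ref{lem:lct>1/2+delta}. Its threshold is $\tfrac{1}{n}\binom{2n-2}{n-1}\approx 4^n/n^{3/2}$, with base-$2$ exponent $\approx 2n$, which now genuinely dominates the $h^0$-exponent for every fixed $\epsilon>0$. This bound comes from a cyclic-permutation count of lattice points in a simplex containing $(1,\ldots,1)$, due to Koll\'ar \cite[\S 57]{Kollar-survey}; the same lemma upgrades klt to a uniform $\lct>1+\delta_0$ by noting that only finitely many monomial ideals have colength below the threshold. The paper then applies it with $D=2M|_Y$ (after cutting by $2r-1$ hyperplanes) to obtain $\lct(Y;2M|_Y)>\tfrac12+\delta$, hence $\lct(X;M)>1+2\delta$ by inversion of adjunction.
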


\begin{proof}
Let $M\sim_\bQ H$ be a movable boundary on $X$. By \cite[Proposition 2.1]{Suzuki-cpi}, there exists a subset $Z\subseteq X$ of dimension at most $2r-1$ such that $\mult_x(M^2)\le 1$ for all $x\not\in Z$. Let $x\in X\backslash Z$ and let $S$ be a general surface section of $X$ containing $x$, then by \cite[Theorem 0.1]{dFEM-mult-and-lct}, $(S,2M|_S)$ is lc at $x$, hence by inversion of adjunction, $(X,2M)$ is lc at $x$ as well. It follows that for all $0< c \ll 1$, the pair $(X,2(1-c)M)$ is klt outside $Z$. Let $x\in X$ be any point and let $Y\subseteq X$ be cut out by a general linear subspace $V\subseteq\bP^{n+r}$ of codimension $2r-1$ containing $x$. Then $Y\subseteq\bP:=\bP^{n-r+1}$ is also a codimension $r$ complete intersection and we have $K_Y\sim (s+2r-1)H$. Let $D=2M|_Y$ and $L=(s+2r+1)H\sim_\bQ K_Y+D$. Since $V$ is general and $\dim Z \le 2r-1$, $(Y,(1-c)D)$ is klt outside a finite set of points. By Lemma \ref{lem:lct>1/2+delta} we have $\lct(Y;D)>\frac{1}{2}+\delta$ for some absolute constant $\delta>0$ as long as
\begin{equation} \label{eq:h^0(L)}
    h^0(Y,L)\le h^0(\bP,\cO_{\bP}(s+2r+1))=\binom{n+s+r+2}{s+2r+1}<\frac{1}{n-2r+1}\binom{2(n-2r+1)}{n-2r+1}
\end{equation}
which holds as $n\ge N\gg 0$.
\end{proof}

\begin{proof}[Proof of Theorem \ref{thm:Tian's conj}]
This follows directly from Proposition \ref{prop:alpha^2>alpha} and Lemma \ref{lem:alpha^2 of cpi}.
\end{proof}

\begin{proof}[Proof of Corollary \ref{cor:constructible}]
It suffices to show that \eqref{eq:h^0(L)} holds when $s\le -1$ and $n\ge 10r$, which can be easily verified.
\end{proof}

\begin{rem}
For general polarized klt pairs $(X,\Delta;L)$, Conjecture \ref{conj:Tian} is not true: it is not hard to see that the alpha invariant only depends on the numerical equivalence class of the line bundle $L$, but if $D\sim_\bQ L$ is a divisor whose lct computes $\alpha(X,\Delta;L)$ and $N$ is a non-torsion numerically trivial line bundle on $X$, then $D$ is no longer the support of a divisor in some $|m(L+N)|$ and it is unlikely to have another divisor that computes $\alpha(X,\Delta;L+N)$. The following example shows that in general there may not even exist a divisor $D$ numerically equivalent to $L$ such that $\lct(X,\Delta;D)=\alpha(X,\Delta;L)$, so Tian's conjecture is false even up to replacing $L$ by a numerically equivalent one.
\end{rem}

\begin{expl}
Let $C$ be a curve of genus at least $2$ and $\cE$ a stable vector bundle of degree $0$ and rank $2$ on $C$ such that the line bundle $L_0=\cO(1)$ on $X=\bP_C(\cE)$ is pseudo-effective but not numerically equivalent to any effective divisor (see e.g. \cite[Example 1.5.1]{Laz-positivity-1}). Let $\Delta=0$ and $L=L_0+aF$ where $F$ is the fiber class and $a\ge 2$ is an integer. It is clear that $\alpha(X;L)\le \frac{1}{a}$. On the other hand we claim that for any divisor $D\equiv L$ we have $\lct(X;D)> \frac{1}{a}$. To see this, fix a fiber $F$ and let $D=cF+D_0$ where $D_0$ does not contain $F$ in its support, then $c<a$ by our assumption on $L_0$. Since $(D_0\cdot F)=(L\cdot F)=1$, $(F,D_0|_F)$ is lc and thus $(X,F+D_0)$ is also lc by inversion of adjunction. Hence either $c\le 1$ and $(X,D)$ is lc or $1<c<a$ and $(X,\frac{1}{c}D)$ is lc. In both cases, $\lct(X;D)\ge \min\{1,\frac{1}{c}\}>\frac{1}{a}$. Hence $\alpha(X;L)=\frac{1}{a}$ but is not computed by any divisor $D\equiv L$. Note that the same argument also proves that $\alpha^{(2)}(X;L)\ge 1 > \alpha(X;L)$, hence the Picard number one assumption in Proposition \ref{prop:alpha^2>alpha} cannot be removed. Also note that although the alpha invariant is not computed by some $D\equiv L$, it is computed by some divisor $E$ over $X$ in the sense that
\begin{equation} \label{eq:computed by E}
    \alpha(X,\Delta;L)=\frac{A(E;X,\Delta)}{\tau(L;E)}
\end{equation}
where $\tau(L;E)$ is the pseudo-effective threshold of $L$ with respect to $E$ (i.e. the largest $t>0$ such that $\pi^*L-tE$ is pseudoeffective where $\pi:Y\rightarrow X$ is a birational morphism that extracts $E$). Therefore, instead of Conjecture \ref{conj:Tian} it seems better to ask
\end{expl}

\begin{que}
Let $(X,\Delta)$ be a klt pair and $L$ an ample line bundle. Does there always exist a divisor $E$ over $X$ such that \eqref{eq:computed by E} holds?
\end{que} 

\bibliography{ref}
\bibliographystyle{alpha}

\end{document}